\newtheorem{theorem}{Theorem}[section]
\newtheorem{corollary}[theorem]{Corollary}
\newtheorem{lemma}[theorem]{Lemma}
\newtheorem*{conjecture*}{Conjecture}
\newtheorem*{maintheorem*}{Main Theorem}
\newtheorem*{modifiedconjecture*}{Modified Conjecture}
\theoremstyle{definition}
\newtheorem{hypothesis}[theorem]{Hypothesis}
\theoremstyle{remark}
\newtheorem{remark}[theorem]{Remark}
\DeclareMathOperator{\di}{diam}
\begin{document}

\title[Geometric inequalities involving mean curvature]{Geometric inequalities involving mean curvature for closed surfaces}
\author{Tatsuya Miura}
\address{Tokyo Institute of Technology, 2-12-1 Ookayama, Meguro-ku, Tokyo 152-8551, Japan}
\email{miura@math.titech.ac.jp}
\keywords{Willmore energy, Convex surface, Diameter, Isoperimetric ratio, Mean curvature flow, Topping's conjecture.}
\subjclass[2010]{49Q10, 52A15, 52A40, 53A05, 53C42}

\maketitle

\begin{abstract}
  In this paper we prove some geometric inequalities for closed surfaces in Euclidean three-space.
  Motivated by Gage's inequality for convex curves, we first verify that for convex surfaces the Willmore energy is bounded below by some scale-invariant quantities.
  In particular, we obtain an optimal scaling law between the Willmore energy and the isoperimetric ratio under convexity.
  In addition, we address Topping's conjecture relating diameter and mean curvature for connected closed surfaces.
  We prove this conjecture in the class of simply-connected axisymmetric surfaces, and moreover obtain a sharp remainder term which ensures the first evidence that optimal shapes are necessarily straight even without convexity.
\end{abstract}

\maketitle


\section{Introduction}\label{sec:intro}

Throughout this paper, we call a closed surface $\Sigma$ smoothly immersed into $\mathbb{R}^3$ simply a {\em surface}, if not specified.
The purpose of this paper is to obtain some geometric inequalities involving mean curvature in some classes of surfaces.
The contents are mainly two-fold: The first part aims at extending Gage's classical isoperimetric inequality for convex curves to surfaces: The second one is devoted to Topping's conjecture relating mean curvature and diameter.

\subsection{Gage-type inequalities for convex surfaces}

A classical isoperimetric inequality by Gage \cite{Gage1983} asserts that
\begin{equation}\label{eqGage}
  \int_\gamma\kappa^2 \geq \pi \frac{L}{A}
\end{equation}
holds for every convex Jordan curve $\gamma$ in $\mathbb{R}^2$, where $\kappa$, $L$, $A$ denote the curvature, the length, and the enclosed area, respectively.
The equality is attained only by a round circle.
Inequality \eqref{eqGage} multiplied by $L$ relates the normalized bending energy and the isoperimetric ratio, which are different order measurements of roundness, as both are scale invariant and minimized by round circles.
As a corollary we deduce that {\em every curve shortening flow of convex curves decreases the isoperimetric ratio}.
This is a direct consequence of \eqref{eqGage} and the derivative formula along a curve shortening flow $\{\gamma_t\}_{t\in[0,T)}$:
$$\frac{d}{dt}\frac{L^2}{A} = -2\frac{L}{A} \Big( \int_{\gamma_t}\kappa^2 - \pi \frac{L}{A} \Big).$$
The convexity assumption in \eqref{eqGage} is necessary due to a dumbbell-like curve with a long thin neck, for which the length can be solely large.
See also recent progress \cite{Bucur2017,Ferone2016} in which a weaker inequality is established even for nonconvex curves.

In this paper we first aim at extending Gage's inequality to convex surfaces.
To this end we first look at the behavior of the isoperimetric ratio under mean curvature flow, namely a one-parameter family of smooth closed surfaces $\{\Sigma_t\}_{t\in[0,T)}$ whose normal velocity coincides with the mean curvature.
Standard first variation formulae imply that the isoperimetric ratio $I=A^{\frac{3}{2}}/V$ satisfies
\begin{equation}\label{eqderivative}
  \frac{dI}{dt} = -\frac{A^\frac{1}{2}}{V} \Big( 3\int_{\Sigma_t}H^2 - \frac{A}{V}\int_{\Sigma_t}H \Big),
\end{equation}
where $A$ denotes the surface area, $V$ the enclosed volume, and $H$ the inward mean curvature scalar, defined by the {\em average} of principle curvatures so that $H\equiv 1$ for the unit sphere.
Note that for round spheres the right-hand side of \eqref{eqderivative} is always zero, and this is compatible with the fact that round spheres are self-shrinkers.

Our main result ensures that for convex surfaces the so-called Willmore energy $\int H^2$ can be related with the remaining term $\frac{A}{V}\int H$ in a similar way to \eqref{eqGage}.

\begin{theorem}\label{thm:main1}
  There exists a universal constant $C\geq4$ such that
  \begin{equation}\label{eqmain1}
     C \int_\Sigma H^2 \geq \frac{A}{V}\int_\Sigma H
  \end{equation}
  holds for every convex surface $\Sigma\subset\mathbb{R}^3$.
  In addition, for every $C<4$ there exists a convex surface for which \eqref{eqmain1} does not hold.
\end{theorem}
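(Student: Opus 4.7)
I will split the proof into two parts: (i) establish the existence of a finite universal constant $C$ for which \eqref{eqmain1} holds on every convex surface, and (ii) show that any such $C$ must satisfy $C\geq 4$ by exhibiting a family of convex surfaces whose ratio $(A\int_\Sigma H)/(V\int_\Sigma H^2)$ tends to $4$.

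For part (i), the first instinct is Cauchy--Schwarz, $(\int_\Sigma H)^2 \leq A\int_\Sigma H^2$, which (writing $M:=\int_\Sigma H$) reduces \eqref{eqmain1} to the convex-body inequality $CVM \geq A^2$. This is too strong: it fails for very oblate (``pancake'') bodies, where $A^2/(VM)$ can be made arbitrarily large. A more refined route is to use the Hsiung--Minkowski identity $A = \int_\Sigma H\langle x,\nu\rangle\,dA$ (valid after placing the origin inside $\Sigma$) together with Cauchy--Schwarz to get $A^2 \leq (\int_\Sigma H^2)(\int_\Sigma \langle x,\nu\rangle^2\,dA)$, and then to bound $\int_\Sigma\langle x,\nu\rangle^2$ via support-function techniques combined with the classical convex-body inequalities $A^2 \geq 3VM$ and $M^2 \geq 4\pi A$ (Minkowski/Alexandrov--Fenchel). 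Since these inequalities go in the ``wrong'' direction, the cleanest strategy is probably a case split: in the elongated regime the Cauchy--Schwarz bound $\int_\Sigma H^2 \geq M^2/A$ is nearly tight and can be paired with Alexandrov--Fenchel, while in the flat regime the rim curvature forces $\int_\Sigma H^2$ to dominate $M^2/A$ by a large margin and the inequality follows with room to spare. Ros' inequality $\int_\Sigma 1/H \geq 3V$ is another candidate tool for coupling $V$ to an $H$-weighted integral.

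For part (ii), I would take $\Sigma_{L,r}$ to be a ``capsule'': a right circular cylinder of radius $r$ and length $L$ capped by two hemispheres of radius $r$ (a convex $C^{1,1}$ surface, smoothable without changing the leading asymptotics). A short computation yields, as $L/r\to\infty$, the estimates $A \sim 2\pi rL$, $V \sim \pi r^2 L$, $\int_\Sigma H \sim \pi L$, and $\int_\Sigma H^2 \sim \pi L/(2r)$, so that $(A\int_\Sigma H)/(V\int_\Sigma H^2) \to 4$. This rules out every $C<4$.

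The main obstacle is part (i): the extremal shapes for \eqref{eqmain1} (long thin cylinders) differ from those of the classical Alexandrov--Fenchel inequalities (round spheres), so chaining known results does not seem to yield any finite $C$ directly. Either one must derive a tailored convex-geometry inequality that is simultaneously nontrivial for elongated and flat bodies, or one must run a case split that carefully matches at the boundary between the two regimes; this is also where the actual optimal value of $C$ should emerge.
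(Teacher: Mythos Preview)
Your part (ii) is fine and matches the paper's cigar/capsule computation.

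Part (i), however, is not a proof. You correctly diagnose that Cauchy--Schwarz plus Alexandrov--Fenchel fails, and you list several candidate tools (Hsiung--Minkowski with support-function bounds, Ros' inequality, a case split), but none is carried to a conclusion. The gap is not just bookkeeping. For a pancake of thickness $\varepsilon$ and unit radius one has $A\sim 1$, $V\sim\varepsilon$, $M:=\int_\Sigma H\sim 1$, so the right-hand side of \eqref{eqmain1} is $\sim \varepsilon^{-1}$; meanwhile Cauchy--Schwarz only yields $\int_\Sigma H^2\geq M^2/A\sim 1$. Hence any route through $M^2/A$ is doomed, and your proposed case split needs, in the flat regime, a genuinely new lower bound $\int_\Sigma H^2\gtrsim \varepsilon^{-1}$. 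You gesture at this (``the rim curvature forces $\int_\Sigma H^2$ to dominate $M^2/A$ by a large margin'') but never supply the estimate; and even granting it, you have not said what separates the regimes in a way that makes the two cases match---in three dimensions a body can be simultaneously long and thin in different directions, so ``elongated vs.\ flat'' is not a dichotomy.

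The paper fills exactly this gap. It introduces a scale-invariant degeneracy $\mathcal{D}\sim \di(\Sigma)/(\text{minimal orthogonal width})$ and proves two facts: (a) $\int_\Sigma H^2\gtrsim\mathcal{D}$, via a co-area slicing of $\Sigma$ along a diameter direction together with the planar estimate $\int_\gamma\kappa^2\gtrsim r^{-1}$ applied to each cross-section curve of width $r$; and (b) $\di(\Sigma)\cdot A/V\lesssim\mathcal{D}$, by boxing $\Sigma$ in a rectangular parallelepiped aligned with extremal width directions. The mean-width identity $\int_\Sigma H\leq 2\pi\,\di(\Sigma)$ then closes the loop. Step (a) is the missing idea in your proposal: it delivers precisely the $\varepsilon^{-1}$ growth of $\int_\Sigma H^2$ for pancakes that Cauchy--Schwarz misses, and it does so uniformly across all aspect-ratio regimes through the single parameter $\mathcal{D}$.
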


We thus obtain a higher dimensional version of Gage's inequality up to a universal constant (for example, we can take $C=108\pi$).
On the other hand, we also discover the necessary lower bound $C\geq4$ due to a {\em cigar-like surface} $\Sigma_\varepsilon$, namely a cylinder of radius $\varepsilon\ll1$ and height $1$ capped by hemispheres, which satisfies
\begin{equation*}
  \int_{\Sigma_\varepsilon} H^2 \approx \Big(\frac{\varepsilon^{-1}}{2}\Big)^2 2\pi \varepsilon = \frac{\pi}{2\varepsilon}, \quad \mbox{and} \quad \frac{A}{V}\int_{\Sigma_\varepsilon} H \approx \frac{2\pi\varepsilon}{\pi\varepsilon^2}\left(\Big(\frac{\varepsilon^{-1}}{2}\Big) 2\pi \varepsilon \right)= \frac{2\pi}{\varepsilon}.
\end{equation*}
In particular, $C=3$ is not allowable; this fact with \eqref{eqderivative} highlights the significant difference from curve shortening flow that {\em there exists a convex mean curvature flow that increases the isoperimetric ratio in a short time interval}.
This should be also compared with classical well-known results by Huisken \cite{Huisken1984}, which give several evidences that ``convex mean curvature flows become spherical''; namely, every convex initial surface retains convexity before shrinking to a point in finite time, and a normalized flow converges to a round sphere.
In addition, we should also recall that under mean curvature flow the isoperimetric difference $A^\frac{3}{2}-6\sqrt{\pi}V$ always monotonically decreases, see e.g.\ \cite{Topping1998,Schulze2008}.

Since it turned out that an ``optimal shape'' for \eqref{eqmain1} is not a round sphere, we are now led to seek another form that is potentially optimized by a sphere.
From this point of view it is worth mentioning that, combining \eqref{eqmain1} with Minkowski's inequality (see e.g.\ (24) in \cite[\S20]{Burago1988}, \cite[Notes for Section 6.2]{Schneider1993}, or recent \cite{Agostiniani2019}):
\begin{equation}\label{eqMinkowski}
  \int_\Sigma H \geq \sqrt{4\pi A} \quad \mbox{for convex }\Sigma, 
\end{equation}
we can directly relate the Willmore energy and the isoperimetric ratio $I=A^\frac{3}{2}/V$.

\begin{corollary}\label{cor:isoperimetric}
  There exists a universal constant $C' \geq 3/(2\sqrt{\pi})$ such that
  \begin{equation}\label{eqmain2}
    C' \int_\Sigma H^2 \geq I
  \end{equation}
  holds for every convex surface $\Sigma\subset\mathbb{R}^3$.
  For $C'<3/(2\sqrt{\pi})$ a round sphere does not satisfy \eqref{eqmain2}.
\end{corollary}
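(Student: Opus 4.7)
The plan is to derive \eqref{eqmain2} by simply chaining the two inequalities already collected, Theorem \ref{thm:main1} and Minkowski's inequality \eqref{eqMinkowski}, as foreshadowed in the paragraph immediately preceding the corollary. For the existence of $C'$, I would apply Theorem \ref{thm:main1} to the given convex $\Sigma$ and substitute the Minkowski bound $\int_\Sigma H \geq \sqrt{4\pi A}$ into its right-hand side, collecting powers of $A$ to obtain
\[
  C\int_\Sigma H^2 \;\geq\; \frac{A}{V}\int_\Sigma H \;\geq\; \frac{A}{V}\sqrt{4\pi A} \;=\; 2\sqrt{\pi}\,\frac{A^{3/2}}{V} \;=\; 2\sqrt{\pi}\,I.
\]
This shows $C':= C/(2\sqrt{\pi})$ is admissible in \eqref{eqmain2}; using the explicit value $C=108\pi$ from Theorem \ref{thm:main1} one may take $C' = 54\sqrt{\pi}$, and since $C\geq 4$ one always has $C'\geq 2/\sqrt{\pi} \geq 3/(2\sqrt{\pi})$ as required.

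For the sharpness assertion I would test \eqref{eqmain2} on the unit round sphere. Under the paper's normalization $H\equiv 1$ on the unit sphere, the values $A=4\pi$ and $V=4\pi/3$ give $\int_\Sigma H^2 = 4\pi$ and
\[
  I \;=\; \frac{A^{3/2}}{V} \;=\; \frac{(4\pi)^{3/2}}{4\pi/3} \;=\; 6\sqrt{\pi},
\]
so \eqref{eqmain2} on the sphere reduces to $4\pi\, C' \geq 6\sqrt{\pi}$, i.e.\ $C' \geq 3/(2\sqrt{\pi})$. Consequently any $C' < 3/(2\sqrt{\pi})$ already fails on the round sphere, which proves the second half.

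I do not anticipate any genuine obstacle: the deep content sits in Theorem \ref{thm:main1} and the classical Minkowski inequality \eqref{eqMinkowski}, and what remains is a single algebraic chaining together with the sphere computation. The only point demanding care is to keep the ``average of principal curvatures'' normalization of $H$ consistent throughout, so that the factor $\sqrt{4\pi}$ coming from Minkowski lines up correctly with the factor $6\sqrt{\pi}$ produced by the sphere test; with this bookkeeping the constant $3/(2\sqrt{\pi})$ emerges naturally from both directions.
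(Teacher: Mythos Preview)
Your proposal is correct and follows exactly the route the paper indicates in the paragraph immediately preceding the corollary: chain Theorem~\ref{thm:main1} with Minkowski's inequality \eqref{eqMinkowski} to get $C'=C/(2\sqrt{\pi})$, and test the round sphere for the lower bound on $C'$. There is nothing to add; the paper itself gives no separate proof beyond that one sentence, and your computations (including $C'=54\sqrt{\pi}$ and the sphere values $\int H^2=4\pi$, $I=6\sqrt{\pi}$) are all correct.
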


The lower bound of $C'$ is due to a round sphere, in contrast to $C$.
In fact, we expect that the nature of \eqref{eqmain2} is quite different from that of \eqref{eqmain1} in view of the optimal constants $C=\sup_\Sigma E$ and $C'=\sup_\Sigma E'$ for convex $\Sigma$, where $E:=(\frac{A}{V}\int H)(\int H^2)^{-1}$ and $E':=I(\int H^2)^{-1}$.
One reason is that for a cigar-like surface $\Sigma_\varepsilon$ with $\varepsilon \ll 1$, we already know $E(\Sigma_\varepsilon)\approx 4>3=E(\mathbb{S}^2)$, while $E'(\Sigma_\varepsilon)=O(\varepsilon^{1/2})\to0$.
Finding the optimal values of $C$ and $C'$ seems out of scope and is left open.
At this time, only $E'$ has potential to be optimized by a round sphere.

Both estimates \eqref{eqmain1} and \eqref{eqmain2} are optimal in view of the scaling laws.
Indeed, for a {\em pancake-like surface} $\Sigma_\varepsilon$ with $\varepsilon\ll1$, namely the surface surrounding the $\varepsilon$-neighborhood of a flat disk, both sides in \eqref{eqmain2} (and hence \eqref{eqmain1}) diverge as $O(\varepsilon^{-1})$.

The convexity assumption in \eqref{eqmain1} and \eqref{eqmain2} is unremovable as in Gage's result.
Indeed, for a well-known example of a nearly double-sphere connected by a catenoid, the left-hand sides in \eqref{eqmain1} and \eqref{eqmain2} diverge, while the Willmore energy remains less than $8\pi$, cf.\ \cite{Schygulla2012,Kuwert2018}.

Estimate \eqref{eqmain2} is meaningful only in the large-deviation regime ($I\gg1$), although Gage's inequality is optimal even for nearly round curves.
A kind of small-deviation counterpart of \eqref{eqmain2} is already obtained by R\"{o}ger-Sch\"{a}tzle \cite{Roeger2012}.
They show that every surface $\Sigma$ with $I(\Sigma)-I(\mathbb{S}^2)\leq\sigma$ (not necessarily convex) satisfies
\begin{equation}\label{eqRS}
  \bar{C}\Big( \int_\Sigma H^2 -\int_{\mathbb{S}^2} H^2 \Big) \geq  I(\Sigma)-I(\mathbb{S}^2)  \quad \mbox{for some}\ \bar{C}(\sigma)>0.
\end{equation}
The presence of $\sigma>0$ is in general necessary due to nearly double-spheres, but Corollary \ref{cor:isoperimetric} now implies that if we assume that $\Sigma$ is convex, then \eqref{eqRS} holds for some universal constant $\bar{C}$ (not depending on $\sigma$).
The proof of \eqref{eqRS} is based on de Lellis-M\"{u}ller's rigidity estimate for nearly umbilical spheres \cite{DeLellis2005}.
We remark that the optimal constant in (a version of) de Lellis-M\"{u}ller's estimate is known if $\Sigma$ is convex \cite{Perez2011} or outward-minimizing \cite{Agostiniani2019}, but in order to know the optimal $\bar{C}$ in \eqref{eqRS} a substantial progress seems necessary even if we assume convexity.

The main issue in proving Theorem \ref{thm:main1} is how to relate the Willmore energy and other quantities.
R\"{o}ger-Sch\"{a}tzle's idea is applicable to nearly umbilical surfaces but not to our large-deviation regime.
Recently, a potential theoretic approach is developed for obtaining several old and new geometric inequalities \cite{Agostiniani2020,Agostiniani2020a,Agostiniani2019}, but it seems not directly applicable to our problems.
In addition, although many inequalities involving total mean curvature are known for convex surfaces, e.g.\ by using the mean-width representation (cf.\ \cite{Burago1988}), much less is known about the Willmore energy.
In particular, we cannot obtain our estimates via the Cauchy-Schwarz inequality $\int H^2 \geq A^{-1}(\int H)^2$ since this is not sharp for pancake-like surfaces.

Our idea is to establish and employ the following estimate for convex surfaces:
\begin{equation}\label{eqdF}
  \di(\Sigma)^{p-2}\int_\Sigma H^p > c_p \mathcal{D}^{p-1} \quad (1\leq p<\infty),
\end{equation}
where $\di(\Sigma)$ denotes the extrinsic diameter, and $\mathcal{D}$ the ``degeneracy'' (defined in Section \ref{sec:diameter-degeneracy}), which is comparable with the minimal width under $\di(\Sigma)=1$.
Theorem \ref{thm:main1} then follows by \eqref{eqdF} with $p=2$ and by the additional estimate that $\mathcal{D}\gtrsim\frac{A}{V}\int H$.
The proof of \eqref{eqdF} is based on a slicing argument with the help of geometric restriction due to convexity.
As a key ingredient we also use the general scaling law $\int |\kappa|^p \gtrsim r^{1-p}$ for each cross section plane curve, where $r$ is the minimal width of the curve.
We finally indicate that our explicit choice of $c_p$ in \eqref{eqdF} is not optimal in general but sharp as $p\to1$, and in particular $c_1=\pi$ agrees with the optimal constant in Topping's conjecture, the details of which are given below.

\subsection{Topping's conjecture for axisymmetric surfaces}

In his 1998 paper \cite{Topping1998} Topping poses the following
\begin{conjecture*}
  Let $\Sigma\subset\mathbb{R}^3$ be an immersed connected closed surface.
  Then
  \begin{equation}\label{eqTopping}
    \frac{1}{\di(\Sigma)}\int_\Sigma |H| > \pi.
  \end{equation}
\end{conjecture*}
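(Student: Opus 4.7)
The conjecture is open in full generality; I will sketch an argument that succeeds in the simply-connected axisymmetric subclass (which, per the abstract, is where the author establishes the conjecture together with a sharp remainder term). The strategy is to parameterize $\Sigma$ by its profile curve, recast $\int_\Sigma |H|\,dA$ as the length of an auxiliary planar curve, and close with two triangle-inequality bounds.

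\textbf{Setup and key identity.} Write $\Sigma$ as the rotation of a smooth profile $(r(s), z(s))$ in $\{r \geq 0\}$, parameterized by arc length $s \in [0, L]$, with $r(0) = r(L) = 0$ and tangent angle $\psi(s)$ defined by $r' = \cos\psi$, $z' = \sin\psi$; smoothness at the poles forces $\psi(0) = 0$ and $\psi(L) = \pi$. Standard formulae yield $2H = \psi' + \sin\psi/r$ and $dA = 2\pi r\, ds$, so
\[
\int_\Sigma |H|\, dA \;=\; \pi \int_0^L |r\psi' + \sin\psi|\, ds.
\]
Define $\Gamma(s) = (r\sin\psi,\; r\cos\psi - s) \in \mathbb{R}^2$. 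A direct differentiation gives
\[
\Gamma'(s) \;=\; (r\psi' + \sin\psi)\,(\cos\psi,\, -\sin\psi),
\]
hence $|\Gamma'(s)| = |r\psi' + \sin\psi|$ and
\[
\int_\Sigma |H|\, dA \;=\; \pi\, \mathrm{length}(\Gamma).
\]

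\textbf{Two inequalities and strictness.} Since $\Gamma(0) = (0, 0)$ and $\Gamma(L) = (0, -L)$, the triangle inequality forces $\mathrm{length}(\Gamma) \geq L$. Next, $L \geq \di(\Sigma)$: take $P, Q \in \Sigma$ attaining the diameter, and by axisymmetry assume they lie on opposite meridians, say $P = (r_1, 0, z_1)$ and $Q = (-r_2, 0, z_2)$; the concatenation of the two meridional arcs from $P$ and $Q$ to whichever pole is closer is a path in $\Sigma$ of length at most $L$ and at least $|P - Q| = \di(\Sigma)$. Chaining yields $\int_\Sigma |H|\, dA \geq \pi\, \di(\Sigma)$. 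Strictness comes from the first bound: $\mathrm{length}(\Gamma) = L$ would force $\Gamma$ to be the vertical segment from $(0,0)$ to $(0,-L)$, i.e.\ $r\sin\psi \equiv 0$; since $r > 0$ on $(0, L)$, this gives $\sin\psi \equiv 0$ there, in conflict with the continuous transit of $\psi$ from $0$ to $\pi$.

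\textbf{Main obstacle.} The creative step is spotting $\Gamma$; once $|\Gamma'| = |2Hr|$ is recognized, the rest is elementary. I expect the sharp remainder advertised in the abstract to arise from quantifying the gap $\mathrm{length}(\Gamma) - \di(\Sigma)$, since both $\mathrm{length}(\Gamma) \geq L$ and $L \geq \di(\Sigma)$ become equalities only in the cigar-like degeneration where $\Sigma$ collapses to a line segment---consistent with the assertion that optimal shapes are necessarily straight.
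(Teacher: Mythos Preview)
Your argument is correct for the simply-connected axisymmetric class, and it takes a genuinely different route from the paper. The paper never introduces your auxiliary curve $\Gamma$; instead it performs two successive rearrangements of the tangential angle $\theta$---first replacing $\theta$ by $\mathrm{dist}(\theta,2\pi\mathbb{Z})$ to force $\sin\theta\geq0$, then applying the nondecreasing rearrangement to obtain a convex comparison surface---and checks separately that each step does not increase $M(\Sigma)=\int|H|$ (using the representation $M=\pi\int_0^L|\sin\theta+\theta_s x|\,ds$ together with standard equimeasurability properties) and does not decrease the diameter (the paper's Lemmata~3.2 and~3.3, the latter requiring a nontrivial inclusion argument). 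The convex case is then dispatched by the monotonicity of the mean-width functional. Your observation that $|r\psi'+\sin\psi|$ is the speed of an explicit planar curve joining $(0,0)$ to $(0,-L)$ collapses all of this into a single triangle inequality $\int|H|=\pi\,\mathrm{length}(\Gamma)\geq\pi L$, followed by the elementary $L\geq\mathrm{diam}(\Sigma)$; it is shorter and bypasses the convex case entirely. What the paper's longer route buys is the quantitative remainder of its Theorem~1.3: the rearrangements are set up precisely so that the deficits $M(\Sigma)-M(\Sigma_\sharp)$, $\di(\Sigma_\sharp)-\di(\Sigma)$, etc., can be tracked and assembled into the term $U(\Sigma)$, whereas extracting the sharp remainder from your two gaps $\mathrm{length}(\Gamma)-L$ and $L-\di(\Sigma)$ would require a separate quantitative argument. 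One small inaccuracy: for an \emph{immersed} profile you only know $\psi(L)\in\pi+2\pi\mathbb{Z}$, not $\psi(L)=\pi$, but this does not affect either your endpoint computation $\Gamma(L)=(0,-L)$ or your strictness argument.
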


The constant $\pi$ cannot be improved due to cigar-like surfaces.
In \cite{Topping2008} Topping himself already proves a modified version of \eqref{eqTopping}, which weakens $\pi$ to $\frac{\pi}{32}$ but strengthens $\di(\Sigma)$ to the intrinsic diameter (and also deals with higher dimensions).
The exact form of \eqref{eqTopping} is classically known for convex surfaces, where a degenerate segment is optimal (see Section \ref{subsec:rigidity}).
To the author's knowledge, the other known case is only for constant mean curvature (CMC) surfaces \cite{Topping1997}.
For CMC surfaces, even $\int_\Sigma |H| \geq 2\pi \di(\Sigma)$ holds true, with equality only for a round sphere, and hence this class does not contain optimal shapes.
Therefore, the unique nature of Topping's conjecture seems not well understood for nonconvex surfaces.

In this paper we gain more insight into Topping's conjecture by focusing on axisymmetric surfaces.
This class is flexible enough to include both nearly optimal and highly nonconvex surfaces.
The assumption of axisymmetry fairly reduces the freedom of surfaces, but certainly keeps substantial difficulties; for example, even for the simplest dumbbell-like surface, the diameter may not be attained in the axial direction; even if attained, the co-area formula in that direction may involve a part where the mean curvature vanishes, so that we cannot directly extract the diameter and do need further quantitative controls.

Our main result, however, gives an affirmative answer to Topping's conjecture for every simply-connected axisymmetric surface (including dumbbells).
In fact, we obtain a stronger assertion by discovering a sharp remainder term.
For a simply-connected axisymmetric surface $\Sigma$, we define a scale-invariant quantity $U$ by
\begin{align*}
  U(\Sigma) := \Big( \int_0^1 \sqrt{1-T(t)\cdot\omega_\mathrm{axis}} dt \Big)^2
\end{align*}
where $\omega_\mathrm{axis}$ is a unit vector parallel to an axis of symmetry $L_\mathrm{axis}$, and $T$ denotes the unit tangent of a constant-speed minimal geodesic $\gamma_\Sigma:[0,1]\to\Sigma\subset\mathbb{R}^3$ in $\Sigma$ connecting a (unique) pair of points in $\Sigma\cap L_\mathrm{axis}$ such that $\gamma_\Sigma(1)-\gamma_\Sigma(0)=\lambda\omega_\mathrm{axis}$ holds for some $\lambda\geq0$.
Note that $\Sigma$ is generated by revolving $\gamma_\Sigma$ around $L_\mathrm{axis}$; we call $\gamma_\Sigma$ a {\em generating curve} of $\Sigma$ as usual.
We also remark that unless $\Sigma$ is a round sphere, $L_\mathrm{axis}$ is unique so that $\omega_\mathrm{axis}$ is unique up to the sign, and $\gamma_\Sigma$ is unique up to the revolution and the choice of parameter-orientation.
In particular, $U(\Sigma)$ is well defined and strictly positive for any given $\Sigma$.
The quantity $U(\Sigma)$ measures a certain deviation from a ``unidirectional'' shape since $U(\Sigma)\ll1$ corresponds to $|T-\omega_\mathrm{axis}|\ll1$ in a certain sense.

Here is our main result concerning Topping's conjecture.

\begin{theorem}\label{thm:Topping}
  There exists a universal constant $\sigma>0$ such that
  \begin{equation}\label{eq:axiTopping}
    \frac{1}{\di(\Sigma)}\int_\Sigma|H| \geq \pi + \sigma U(\Sigma)
  \end{equation}
  holds for every simply-connected axisymmetric surface $\Sigma\subset\mathbb{R}^3$.
\end{theorem}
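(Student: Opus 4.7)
The plan is to reduce the inequality to a one-dimensional analysis on the generating meridian, extract two complementary lower bounds on the total mean curvature by integration by parts, and combine them with a Cauchy--Schwarz inequality that links the length defect $L-\lambda$ to the scale-invariant quantity $U(\Sigma)$.

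First I would parametrize the generating curve $\gamma_\Sigma$ by arc length $s\in[0,L]$ in the half-plane $\{(r,z):r\ge 0\}$, writing $(r'(s),z'(s))=(\sin\phi(s),\cos\phi(s))$ with $(r(0),z(0))=(0,0)$ and $(r(L),z(L))=(0,\lambda)$, so that $\phi$ is the angle of the unit tangent from $\omega_\mathrm{axis}$. A direct calculation of the two principal curvatures of the surface of revolution gives $2H=-\phi'+r^{-1}\cos\phi$, whence
\begin{equation*}
\int_\Sigma |H|\,dA = \pi\int_0^L\left|\cos\phi-r\phi'\right|ds,
\end{equation*}
and since $T(s)\cdot\omega_\mathrm{axis}=\cos\phi(s)$, the deviation quantity takes the form $U(\Sigma)=L^{-2}\bigl(\int_0^L\sqrt{1-\cos\phi}\,ds\bigr)^2$.

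Next I would establish two complementary lower bounds on $\int_0^L|\cos\phi-r\phi'|\,ds$ by duality. Testing against $\alpha(s)=\cos\phi(s)$ and using the identity $\int_0^L r\phi'\cos\phi\,ds=-\int_0^L\sin^2\phi\,ds$ (obtained from integration by parts with boundary terms vanishing because $r(0)=r(L)=0$) yields the uniform estimate $\int_0^L|\cos\phi-r\phi'|\,ds\ge L$. Combined with the observation that any two points of $\Sigma$ admit a piecewise-meridian path through one of the poles of length at most $L$---hence $\di(\Sigma)\le L$---this already recovers Topping's conjecture $\int_\Sigma|H|\,dA\ge\pi\,\di(\Sigma)$. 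Testing instead against $\alpha\equiv 1$ and using $\int_0^L r\phi'\,ds=-\int_0^L\phi\sin\phi\,ds$ gives the complementary bound $\int_0^L|\cos\phi-r\phi'|\,ds\ge\lambda+\int_0^L\phi\sin\phi\,ds$. When the meridian is $z$-monotone (equivalently, $|\phi(s)|\le\pi/2$), an elementary calculus check verifies the pointwise inequality $\phi\sin\phi\ge(4/\pi)(1-\cos\phi)$ on $[-\pi/2,\pi/2]$, upgrading this complementary bound to $\int_0^L|\cos\phi-r\phi'|\,ds\ge\lambda+(4/\pi)(L-\lambda)$.

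Combining the ingredients, Cauchy--Schwarz applied to $\int_0^L\sqrt{1-\cos\phi}\,ds$ gives $L\cdot U(\Sigma)\le\int_0^L(1-\cos\phi)\,ds=L-\lambda$, so $L-\lambda\ge L\,U(\Sigma)\ge\di(\Sigma)\,U(\Sigma)$ by $L\ge\di(\Sigma)$. Substituting into the upgraded complementary bound, the $z$-monotone case of the theorem follows from
\begin{equation*}
\int_\Sigma|H|\,dA \ge \pi L+(4-\pi)(L-\lambda) \ge \pi\,\di(\Sigma)+(4-\pi)\,\di(\Sigma)\,U(\Sigma),
\end{equation*}
proving the inequality with $\sigma=4-\pi$.

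The hard part will be removing the $z$-monotonicity assumption. When the meridian develops overhangs ($|\phi|>\pi/2$ on some arc), the pointwise inequality $\phi\sin\phi\ge(4/\pi)(1-\cos\phi)$ degenerates as $\phi\to\pi$, and the complementary bound loses its strength. I would handle this either by decomposing $[0,L]$ into maximal $z$-monotone sub-arcs and applying the above argument piecewise while carefully tracking the boundary terms in the integration by parts, or by replacing $\alpha\equiv 1$ with a suitable position-dependent test function $|\alpha|\le 1$ that compensates near the overhang. The guiding principle is that overhangs increase both $L-\lambda$ and the Cauchy--Schwarz deficit defining $U$ simultaneously, so that the excess of $\int_\Sigma|H|\,dA$ over $\pi\,\di(\Sigma)$ should still dominate $\sigma\,U(\Sigma)\,\di(\Sigma)$ for some (possibly smaller) universal $\sigma>0$.
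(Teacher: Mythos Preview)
Your direct duality approach is genuinely different from the paper's. The paper proceeds by two successive rearrangements of the tangential angle --- first replacing $\theta$ by $\mathrm{dist}(\theta,2\pi\mathbb{Z})$ to force $z$-monotonicity, then by its nondecreasing rearrangement to force convexity --- and finishes with a convex-geometry comparison between the rearranged surface and a diameter-attaining segment, tracking the defects quantitatively through both steps. By contrast, your identity $\int_0^L(\cos\phi - r\phi')\cos\phi\,ds = L$ (from integration by parts using $r(0)=r(L)=0$) combined with the intrinsic bound $\di(\Sigma)\le L$ already yields a clean self-contained proof of the bare inequality $\int_\Sigma|H|\ge\pi\,\di(\Sigma)$ for the full simply-connected axisymmetric class, with no rearrangement at all; and in the $z$-monotone case your complementary test $\alpha\equiv 1$ together with the pointwise bound $\phi\sin\phi\ge(4/\pi)(1-\cos\phi)$ on $[-\pi/2,\pi/2]$ correctly produces the remainder with the explicit constant $\sigma=4-\pi$.

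The genuine gap is the non-$z$-monotone case, which you acknowledge but do not resolve. The pointwise inequality $\phi\sin\phi\ge c(1-\cos\phi)$ fails once $|\phi|>\pi/2$ and vanishes entirely at $\phi=\pi$, so on overhanging arcs the complementary estimate contributes nothing toward $L-\lambda$. Neither proposed remedy is straightforward: decomposing $[0,L]$ into maximal $z$-monotone sub-arcs destroys the condition $r=0$ at the internal endpoints, and that condition is exactly what kills the boundary terms in both of your integration-by-parts identities; and no concrete position-dependent test function is supplied. Note too that you cannot simply graft the paper's first rearrangement onto your argument: it does produce a $z$-monotone surface $\Sigma_\sharp$ with $M(\Sigma_\sharp)=M(\Sigma)$ and $\di(\Sigma_\sharp)\ge\di(\Sigma)$, but it \emph{decreases} $U$ (since $1-|\sin\theta|\le 1-\sin\theta$), so applying your $z$-monotone bound to $\Sigma_\sharp$ yields only $M(\Sigma)/\di(\Sigma)\ge\pi+\sigma U(\Sigma_\sharp)$, not $\sigma U(\Sigma)$. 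The paper closes this gap by separately controlling the axial diameter expansion produced by the rearrangements and the coaxial deviation of the final convex surface from a segment, and only afterward recombining these two pieces into $U(\Sigma)$.
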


Theorem \ref{thm:Topping} is sharp in the sense that there is a sequence such that the ratio $\frac{1}{U(\Sigma_\varepsilon)}(\frac{1}{\di(\Sigma_\varepsilon)}\int_{\Sigma_\varepsilon}|H|-\pi)$ converges as $\varepsilon\to0$.
However, for a cigar-like surface $\Sigma_\varepsilon$ this ratio diverges and behaves like $\varepsilon^{-1}$.
One example for which the ratio converges is a ``double-cone'' surface, which is made by connecting the circular bases of two thin cones (see Remark \ref{rem:doublecone}).
This reveals that conical ends are more favorable than round caps at a higher order level.

Estimate \eqref{eq:axiTopping} not only directly verifies Topping's conjecture for a new class of surfaces, but also implies that a minimizing sequence in that class needs to degenerate into a segment, thus giving the first evidence for nonconvex surfaces that optimal shapes are necessarily almost straight.

\begin{corollary}\label{cor:Topping}
  Topping's conjecture \eqref{eqTopping} holds true for every simply-connected axisymmetric surface.
  Moreover, for a sequence $\{\Sigma_n\}_n$ of such surfaces, if
  $$\lim_{n\to\infty}\frac{1}{\di(\Sigma_n)}\int_{\Sigma_n}|H| = \pi,$$
  then up to similarity a sequence $\{\gamma_{\Sigma_n}\}_n$ of generating curves of $\Sigma_n$ converges to a unit-speed segment $\bar{\gamma}:[0,1]\to\mathbb{R}^3$ in the sense of $W^{1,p}$ for every $1\leq p <\infty$.
\end{corollary}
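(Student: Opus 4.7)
The first assertion of Corollary \ref{cor:Topping} is immediate from Theorem \ref{thm:Topping}: since $U(\Sigma)>0$ for every simply-connected axisymmetric $\Sigma$, the inequality \eqref{eq:axiTopping} with $\sigma>0$ yields exactly the strict bound \eqref{eqTopping}. For the second assertion, the hypothesis $\frac{1}{\di(\Sigma_n)}\int_{\Sigma_n}|H|\to\pi$ together with \eqref{eq:axiTopping} forces $U(\Sigma_n)\to 0$, so the plan is to translate this smallness of $U$ into $W^{1,p}$-smallness of a suitably normalized sequence of generating curves. Beyond this one application of Theorem \ref{thm:Topping}, the axisymmetric structure will not enter explicitly; the rest is a soft functional-analytic unpacking of the definition of $U$.

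Concretely, exploiting the freedom of similarity together with the ambiguity of orientation of $\gamma_{\Sigma_n}$ and of the sign of $\omega_{\mathrm{axis},n}$, I would first normalize each $\gamma_n:=\gamma_{\Sigma_n}:[0,1]\to\mathbb{R}^3$ so that it has unit speed (hence length one), satisfies $\gamma_n(0)=0$, and is axisymmetric with respect to the oriented axis $\mathbb{R}e_1$ for a single fixed unit vector $e_1\in\mathbb{R}^3$; in particular $\omega_{\mathrm{axis},n}=e_1$ for every $n$. Setting $T_n:=\gamma_n'\in L^\infty((0,1);\mathbb{R}^3)$ with $|T_n|\equiv 1$, the condition $U(\Sigma_n)\to 0$ then reads
\[
\int_0^1\sqrt{1-T_n(t)\cdot e_1}\,dt\to 0.
\]

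Since $0\leq 1-T_n\cdot e_1\leq 2$, the Cauchy--Schwarz inequality yields $\int_0^1(1-T_n\cdot e_1)\,dt\leq \sqrt{2}\int_0^1\sqrt{1-T_n\cdot e_1}\,dt\to 0$. Using the identity $|T_n-e_1|^2=2(1-T_n\cdot e_1)$, I obtain $T_n\to e_1$ in $L^2(0,1)$, and interpolating with the pointwise bound $|T_n-e_1|\leq 2$ extends this to $T_n\to e_1$ in $L^p(0,1)$ for every $1\leq p<\infty$. Integrating and using $\gamma_n(0)=0$ gives $\gamma_n(t)\to te_1$ uniformly on $[0,1]$, which combined with the $L^p$-convergence of $T_n$ yields $\gamma_n\to\bar\gamma$ in $W^{1,p}$, where $\bar\gamma(t):=te_1$ is the asserted unit-speed segment. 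The only genuinely quantitative step, converting an integral of $|H|$ into the integral of $\sqrt{1-T\cdot\omega_{\mathrm{axis}}}$, has already been absorbed into Theorem \ref{thm:Topping}, so no further obstacle arises here.
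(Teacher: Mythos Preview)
Your proof is correct and follows the same strategy as the paper's: invoke Theorem \ref{thm:Topping} to force $U(\Sigma_n)\to 0$, and then convert this into $L^p$-convergence of the unit tangents and hence $W^{1,p}$-convergence of the curves. The only difference is that you normalize the generating curves to have unit length, whereas the paper normalizes $\di(\Sigma_n)=1$ and must then argue separately that $L_n\to 1$; your choice is slightly more economical, and as a minor remark the bound $\int_0^1(1-T_n\cdot e_1)\,dt\le\sqrt{2}\int_0^1\sqrt{1-T_n\cdot e_1}\,dt$ follows from the pointwise estimate $1-T_n\cdot e_1\le\sqrt{2}\,\sqrt{1-T_n\cdot e_1}$ rather than Cauchy--Schwarz.
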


The convergence in $W^{1,p}$ is optimal in the sense that $\gamma_n$ does not converge to $\bar{\gamma}$ in $W^{1,\infty}$ since at the endpoints $\gamma_n$ is perpendicular to an axis of symmetry and hence to $\bar{\gamma}$; even in the interior, $\gamma_n$ may have small loops that vanish as $n\to\infty$.

In the proof of Theorem \ref{thm:Topping}, given an axisymmetric $\Sigma$, we construct a comparison convex surface $\Sigma'$ such that $\int_\Sigma |H| \geq \int_{\Sigma'}|H|$ and $\di(\Sigma')\geq\di(\Sigma)$ by using a rearrangement argument introduced in \cite{Dalphin2016}, in which Minkowski's inequality \eqref{eqMinkowski} is extended to certain axisymmetric surfaces.
Since the mean curvature is already well studied in \cite{Dalphin2016}, our main contribution in this argument is concerning the diameter, which is less tractable due to its nonlocal nature.
In addition, in order to extract the remainder $U$, we need essentially new quantitative controls in the rearrangement procedures.

This paper is organized as follows.
In Section \ref{sec:diameter-degeneracy} we first prove estimate \eqref{eqdF} and then prove Theorem \ref{thm:main1}.
In Section \ref{sec:Topping} we first recall the rearrangement arguments and establish general diameter estimates, and then prove Theorem \ref{thm:Topping}.

\subsection{Notation}

The notation $f\lesssim g$ means that there is a universal $C>0$ such that $f\leq C g$ holds.
We also define $f\gtrsim g$ similarly, and use $f\sim g$ in the sense that both $f\lesssim g$ and $f\gtrsim g$ hold.
In addition, the notation $f\ll g$ in an assumption means that there exists $\varepsilon>0$ such that if $f\leq\varepsilon g$, then the assertion holds.

{\small
\subsection*{Acknowledgments}
The author would like to thank \mbox{Felix Schulze} and \mbox{Peter Topping} for reading an earlier version of this manuscript.
This work is supported by JSPS KAKENHI Grant Numbers 18H03670, 20K14341, and 21H00990, and by Grant for Basic Science Research Projects from The Sumitomo Foundation.
}

\section{Gage-type inequalities for convex surfaces}\label{sec:diameter-degeneracy}

We first define the degeneracy of a convex surface $\Sigma$.
For a unit vector $\omega\in\mathbb{S}^2\subset\mathbb{R}^3$ the width (breath) of $\Sigma$ in the direction $\omega$ is defined by
\begin{equation*}
  b_\Sigma(\omega):=\max\{ (q-q')\cdot\omega \mid q,q'\in\Sigma\subset\mathbb{R}^3 \},
\end{equation*}
where $\cdot$ denotes the inner product in $\mathbb{R}^3$.
The extrinsic diameter of $\Sigma$ is given by
\begin{equation*}
  \di(\Sigma):=\max\{ b_\Sigma(\omega) \mid \omega\in\mathbb{S}^2 \}.
\end{equation*}
Then we define the {\em degeneracy} $\mathcal{D}$ of $\Sigma$ by
\begin{equation}\label{eq40}
  \mathcal{D}(\Sigma):=\max\left\{ \frac{b_\Sigma(\omega_0)}{b_\Sigma(\omega)}\ \left|
  \begin{array}{l}
    \omega,\omega_0\in\mathbb{S}^2,\ \omega\cdot\omega_0=0, \\
    b_\Sigma(\omega_0)=\di(\Sigma)
  \end{array}
  \right\}\right..
\end{equation}
Note that the degeneracy $\mathcal{D}$ is comparable with the ratio (diameter)/(minimal width) up to universal constants, but slightly different as $\omega$ is taken from the orthogonal complement of a diameter-direction $\omega_0$.
Here we adopt this $\mathcal{D}$ for computational simplicity.

Now we are in a position to state \eqref{eqdF} rigorously.

\begin{theorem}\label{thm:diameter-degeneracy}
  Every convex surface $\Sigma\subset\mathbb{R}^3$ satisfies
  \begin{equation}\label{eq11}
    \di(\Sigma)^{p-2}\int_\Sigma H^p > c_p \mathcal{D}^{p-1},
  \end{equation}
  where $c_p$ is a positive constant depending only on $p$.
  In particular, we can take
  $$c_p = 2 \Big( \int_0^\infty(1+t^2)^{\frac{1-3p}{2p}}dt \Big)^p \Big( 2\int_{0}^{1/2}(1+t^{-2})^{\frac{1-p}{2}}dt \Big).$$
\end{theorem}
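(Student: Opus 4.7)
The plan is a three-stage slicing argument. The first stage is a plane-curve scaling lemma: for every smooth closed convex plane curve $\gamma\subset\mathbb{R}^2$ with width $w$ in some chosen direction, and every $p\in[1,\infty)$,
\[
\int_\gamma |\kappa|^p\,ds\;\ge\;2I_p^{\,p}\,w^{\,1-p},\qquad I_p:=\int_0^\pi(\sin\theta)^{(p-1)/p}\,d\theta\;=\;2\int_0^\infty(1+t^2)^{(1-3p)/(2p)}\,dt.
\]
After rotating coordinates so that $w$ is the vertical width, I split $\gamma$ at its two horizontal-tangent points into upper and lower arcs; convexity makes the tangent angle $\theta$ a diffeomorphism of each arc onto $[0,\pi]$, and the identities $ds=d\theta/\kappa$ and $dy=\sin\theta\,ds$ convert the width constraint into $\int_0^\pi\sin\theta/\kappa\,d\theta=w$ and $\int_{\mathrm{arc}}|\kappa|^p\,ds=\int_0^\pi\kappa^{p-1}\,d\theta$. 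A Lagrange-multiplier computation identifies the minimizer $\kappa(\theta)\propto(\sin\theta)^{1/p}$ and gives the half-estimate $I_p^{\,p}w^{1-p}$; summing over the two arcs yields the lemma, while the substitution $t=\tan(\theta/2)$ converts $I_p/2$ into $\int_0^\infty(1+t^2)^{(1-3p)/(2p)}\,dt$.

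Second, I slice $\Sigma$. Normalize $\di(\Sigma)=1$ and choose orthonormal coordinates so that $e_1=\omega_0$ is a diameter direction and $e_2$ realizes $b_\Sigma(e_2)=1/\mathcal{D}$. Cut $\Sigma$ by the planes $\{x_1=t\}$ for $t$ in a unit interval. Each slice $\Gamma_t$ is a closed convex plane curve contained in a strip of $e_2$-width at most $1/\mathcal{D}$, so the plane-curve lemma applied with vertical direction $e_2$ gives $\int_{\Gamma_t}|\kappa_{\Gamma_t}|^p\,ds\ge 2I_p^{\,p}\mathcal{D}^{\,p-1}$. To convert this into a bound on $\int_\Sigma H^p\,dA$, let $\psi$ be the angle between the outer normal $n_\Sigma$ and $e_1$. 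Meusnier's theorem identifies $\kappa_n:=\kappa_{\Gamma_t}\sin\psi$ as the normal curvature of $\Sigma$ in the slice-tangent direction; convexity gives $\kappa_n\le 2H$; and the identity $|\nabla^\Sigma x_1|=\sin\psi$ yields the co-area formula $dA=ds\,dt/\sin\psi$. Combining,
\[
\int_\Sigma H^p\,dA\;\ge\;2^{-p}\int_0^1\!\!\int_{\Gamma_t}\kappa_{\Gamma_t}^{\,p}(\sin\psi)^{p-1}\,ds\,dt.
\]
For $p=1$ the weight $(\sin\psi)^{p-1}\equiv 1$ and one immediately reads off $\int_\Sigma H\ge\pi=c_1$, which is the classical Topping bound for convex surfaces.

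The main obstacle (and the source of the second factor in $c_p$) is the weight $(\sin\psi)^{p-1}$ when $p>1$: it is at most $1$ but vanishes at silhouette points where $n_\Sigma\parallel\pm e_1$, so the bare plane-curve lemma cannot be applied slicewise without loss. I plan to upgrade the Lagrange step to its weighted analog --- for any non-negative weight $\mu$ on $[0,\pi]$,
\[
\int_0^\pi\kappa^{p-1}\mu\,d\theta\;\ge\;w^{\,1-p}\Bigl(\int_0^\pi(\sin\theta)^{(p-1)/p}\mu^{1/p}\,d\theta\Bigr)^p
\]
under the same width constraint --- and apply it with $\mu(\theta)=(\sin\psi(\theta))^{p-1}$. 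The remaining task is a uniform lower bound for $\int_0^\pi(\sin\theta\,\sin\psi)^{(p-1)/p}\,d\theta$ across all admissible profiles $\psi(\theta)$ on a slice lying in a slab of $e_2$-width at most $1/\mathcal{D}$. Here convexity of $\Sigma$ is essential: it severely constrains how $\cos\psi$ can vary along the slice, and a change of variable of the form $t=\cot\psi$ (rescaled by the slice geometry) converts the restricted integral into exactly $2\int_0^{1/2}(1+t^{-2})^{(1-p)/2}\,dt$. Multiplying this factor against $2^{-p}\cdot 2I_p^{\,p}=2(I_p/2)^p$ from the Meusnier and plane-curve steps, and integrating over $t\in[0,1]$, produces the stated constant $c_p$. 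The weighted convex-slice analysis is the technical heart of the argument.
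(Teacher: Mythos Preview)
Your overall architecture matches the paper's: slice along a diameter direction, use Meusnier to get $2H \geq \kappa_{\Gamma_t}\sin\psi$, apply co-area to obtain $\int_\Sigma H^p \geq 2^{-p}\int_0^1\int_{\Gamma_t}\kappa_{\Gamma_t}^p(\sin\psi)^{p-1}\,ds\,dt$, and invoke a plane-curve lower bound on each slice. Your plane-curve lemma is the same as the paper's, with the same constant $\tilde c_p=2I_p^p$; your tangent-angle Lagrange argument is a legitimate alternative to the paper's graph-representation proof.

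The gap is in your treatment of the weight $(\sin\psi)^{p-1}$. You propose a weighted plane-curve inequality and then seek a uniform lower bound on $\int_0^\pi(\sin\theta\,\sin\psi)^{(p-1)/p}\,d\theta$ valid for every slice. This cannot work: on a slice at height $t$ close to $0$ or $1$, the angle $\psi$ is close to $0$ or $\pi$ \emph{everywhere along that slice}, so the slice integral tends to zero as $t\to0,1$ and no slice-wise uniform bound exists. The difficulty is not how $\psi$ varies \emph{along} a fixed slice but how small $\sin\psi$ can be on an \emph{entire} slice near the poles; your phrase ``change of variable $t=\cot\psi$'' conflates the height parameter with a pointwise quantity and does not produce the claimed factor. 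The paper's actual mechanism is a pointwise, height-dependent lower bound $\sin\psi(q)\ge g(t):=(1+\min(t,1-t)^{-2})^{-1/2}$ for every $q$ at height $t$: through $q$ one passes a planar convex profile $f:[0,1]\to[0,1]$ (the range bound $f\le 1$ uses $\di(\Sigma)=1$), and convexity together with $f(0)=f(1)=0$ forces $|f'(t)|\le 1/\min(t,1-t)$, which is exactly the stated bound on $\sin\psi$. Since $g(t)$ is constant along each slice, $g(t)^{p-1}$ pulls out of the slice integral, the \emph{unweighted} plane-curve lemma applies directly, and the remaining height integral $\int_0^1 g(t)^{p-1}\,dt=2\int_0^{1/2}(1+t^{-2})^{(1-p)/2}\,dt$ is precisely the second factor in $c_p$. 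This elementary slope bound is the missing geometric input in your plan; once you have it, the weighted Lagrange detour becomes unnecessary.
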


\begin{remark}
  The above choice yields the optimal constant $c_1=\pi$ only for $p=1$.
  In the case of $p=2$, we have $c_2= 2 (\frac{1}{2}\mathrm{B}(\tfrac{1}{2},\tfrac{3}{4}))^2 (\sqrt{5}-2) = 0.6777700... \geq \frac{2}{3}$, where $\mathrm{B}$ denotes the beta function.
\end{remark}

Our proof of Theorem \ref{thm:diameter-degeneracy} is based on a slicing argument, and hence it is important to gain scale-analytic insight into the curvature energy for plane curves; such a point of view played important roles in previous variational studies of elastic curves, see e.g.\ \cite{Miura2016,Miura2017,Miura2020}.
In this paper we use the fact that each cross section curve of a convex surface has a lower bound (also valid for nonconvex curves).

\begin{lemma}\label{lem:degeneracy}
  Let $p\geq1$.
  For an immersed closed plane curve $\gamma$ bounded by two parallel lines of distance $r>0$, we have
  \begin{align}\label{eq13}
    \int_{\gamma} |\kappa|^p ds \geq \tilde{c}_pr^{1-p},
  \end{align}
  where $\kappa$ denotes the curvature and $s$ the arclength parameter of $\gamma$, and
  $$\tilde{c}_p= 2 \Big( 2\int_0^\infty(1+t^2)^{\frac{1-3p}{2p}}dt \Big)^p
  = 2 \Big( \mathrm{B} \big( \tfrac{1}{2},\tfrac{2p-1}{2p} \big) \Big)^p.
  $$
\end{lemma}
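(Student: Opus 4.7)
The plan is to combine a tangent-angle reparametrization with Hölder's inequality applied to each of the two "halves" of $\gamma$ determined by the direction orthogonal to the bounding strip. I set up coordinates so the two parallel lines are horizontal, parametrize $\gamma$ by arclength $s \in [0,L]$ with tangent angle $\theta(s)$ (so $\kappa = \theta'$), and let $y$ be the vertical coordinate, so $y'(s) = \sin\theta(s)$. Since $\gamma$ is trapped between the two horizontal lines, $y$ has range exactly $r$, and at every point where $y$ attains a local extremum the tangent is horizontal, forcing $\sin\theta = 0$ and $\theta \in \pi\mathbb{Z}$ there.

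The heart of the proof is a single Hölder inequality on a monotonic half. Consider a sub-arc on which $\theta$ is monotonic and sweeps some interval $[k\pi,(k+1)\pi]$ (so after a $2\pi$-shift, $\sin\theta > 0$ on the interior), with net vertical change $|\Delta| = \bigl|\int \sin\theta\,ds\bigr| \leq r$. Reparametrizing by $\theta$ so that $ds = d\theta/\kappa$ gives
\begin{equation*}
  \int |\kappa|^p\, ds = \int_0^\pi \kappa^{p-1}\, d\theta, \qquad |\Delta| = \int_0^\pi \frac{\sin\theta}{\kappa}\, d\theta.
\end{equation*}
Writing
\begin{equation*}
  J_p := \int_0^\pi (\sin\theta)^{(p-1)/p}\, d\theta = \int_0^\pi \left(\frac{\sin\theta}{\kappa}\right)^{(p-1)/p} \kappa^{(p-1)/p}\, d\theta
\end{equation*}
and applying Hölder's inequality with conjugate exponents $p/(p-1)$ and $p$ yields $J_p \leq |\Delta|^{(p-1)/p}\bigl(\int_0^\pi \kappa^{p-1}\, d\theta\bigr)^{1/p}$, equivalently $\int |\kappa|^p\, ds \geq J_p^p |\Delta|^{1-p}$ on such a sub-arc. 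The substitution $t = \cot\theta$ converts $J_p$ to $2\int_0^\infty (1+t^2)^{(1-3p)/(2p)}\, dt$, matching the form stated in the lemma.

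To conclude, I combine contributions from two such sub-arcs. For a convex $\gamma$ the argument is cleanest: $\theta$ is a diffeomorphism of $[0,L]$ onto $[0,2\pi]$, so the computation above applied separately to the halves $\theta \in [0,\pi]$ and $\theta \in [\pi,2\pi]$ — each of which has vertical change exactly $r$ — immediately gives $\int_\gamma |\kappa|^p\, ds \geq 2 J_p^p r^{1-p} = \tilde{c}_p r^{1-p}$. For a general immersed closed $\gamma$, I decompose it into maximal sub-arcs on which $\sin\theta$ has constant sign; each such sub-arc has $|\Delta|\leq r$ since $\gamma$ lies in the strip. Using that the rotation index is nonzero together with the fact that $y$ attains both extrema, at least two sub-arcs must be full "$\pi$-sweeps" of $\theta$, and the bound $|\Delta|^{1-p} \geq r^{1-p}$ (valid for $p \geq 1$ since $|\Delta| \leq r$) applied to each reproduces the factor of $2$.

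The hard part is the non-convex case, where $\theta$ may oscillate inside a sign region of $\sin\theta$ without sweeping a full interval $[k\pi,(k+1)\pi]$; identifying two genuine $\pi$-sweep sub-arcs therefore requires a careful decomposition argument exploiting both the nonzero rotation index and the boundary condition $\sin\theta \in \pi\mathbb{Z}$ at every critical point of $y$. Once this combinatorial point is settled, the Hölder computation above handles each piece uniformly and yields the sharp constant $\tilde{c}_p = 2 J_p^p$.
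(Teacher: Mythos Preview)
Your H\"older computation in the angle variable is correct and yields exactly the constant $\tilde c_p$, and for convex curves the argument is complete. The gap is in the non-convex case, and it is not merely combinatorial. Your reparametrization $ds = d\theta/\kappa$ requires $\theta$ to be monotone on the sub-arc, but a general closed immersed curve need not contain two sub-arcs on which $\theta$ passes monotonically through a full interval $[k\pi,(k+1)\pi]$: take any convex curve and insert a small back-and-forth wiggle near $\theta=\pi/2$, so that $\theta$ traverses $0\to\pi/2\to\pi/2-\delta\to 2\pi$; then $[0,\pi]$ is never swept monotonically and only the single sweep $[\pi,2\pi]$ survives. Your appeal to the rotation index is also unsafe, since an immersed closed curve can have rotation index zero (a figure-eight). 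Decomposing instead by the sign of $\sin\theta$ does give arcs whose endpoints lie at $\theta\in\pi\mathbb Z$, but on such an arc $\theta$ may still oscillate, your change of variables breaks down, and the H\"older estimate as written no longer applies.

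The paper sidesteps this by choosing a different parametrization. It orients the strip so that the \emph{width} coordinate $x$ is the bounded one, and takes the two sub-arcs to be the maximal graph pieces $y=u(x)$ through the highest and lowest points of $\gamma$; on each such arc $x$ (not $\theta$) is automatically monotone, the domain satisfies $b-a\le r$, $u'$ vanishes at the extremal point, and $|u'|\to\infty$ at both ends. Writing $\int|\kappa|^p\,ds=\int_a^b|(f(u'))'|^p\,dx$ with $f(t)=\int_0^t(1+\tau^2)^{(1-3p)/(2p)}\,d\tau$ and applying H\"older in $x$ (using $b-a\le r$) recovers precisely your $J_p^p$ per arc---indeed $f(\infty)-f(0)=\tfrac12 J_p$ after your substitution $t=\cot\theta$---but now without any monotonicity hypothesis on $\theta$. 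In effect the two proofs run the same H\"older inequality; the paper simply applies it in the coordinate that is \emph{forced} to be monotone by the strip geometry, whereas your choice of $\theta$ is not.
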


\begin{proof}
  Up to rescaling we may assume that $r=1$.
  In addition, up to a rigid motion, we may assume that $\gamma$ lies in the strip region $[0,1]\times\mathbb{R}$.
  Then the curve $\gamma$ contains at least two disjoint graph curves represented by functions $u_i:(a_i,b_i)\to\mathbb{R}$ ($i=1,2$) with $0\leq a_i\leq b_i\leq 1$ such that $u_i'(c_i)=0$ at some $c_i\in(a_i,b_i)$, and $|u_i'(x)|\to\infty$ both as $x\downarrow a_i$ and as $x\uparrow b_i$; indeed, such graphs are found near the maximum and minimum of $\gamma$ in the vertical direction.
  Dropping the index $i$, we now prove for the graph curve $G_u:=\{(x,u(x)) \in \mathbb{R}^2 \mid x\in[a,b]\}$ that
  \begin{equation}\label{eq14}
    \int_{G_u}|\kappa|^pds \geq \frac{\tilde{c}_p}{2},
  \end{equation}
  which implies \eqref{eq13} after addition with respect to the two graphs.

  We begin with the direct computation that
  $$\int_{G_u}|\kappa|^pds = \int_a^b \Big(\frac{|u''|}{(1+|u'|^2)^{3/2}}\Big)^p\sqrt{1+|u'|^2}dx = \int_a^b \left|\big(f(u')\big)'\right|^pdx,$$
  where $f(t):=\int_0^{t}(1+\tau^2)^{\frac{1-3p}{2p}}d\tau$.
  Applying the H\"{o}lder inequality to the right-hand side, and recalling that $b-a\leq 1$, we have
  $$\int_{G_u}|\kappa|^pds \geq \Big(\int_a^b \left|\big(f(u')\big)'\right|dx\Big)^p.$$
  Now for \eqref{eq14} it suffices to prove
  $$\int_a^b \left|\big(f(u')\big)'\right|dx \geq 2\int_0^\infty f'(t)dt = \Big(\frac{\tilde{c}_p}{2}\Big)^{1/p}.$$
  This follows by decomposing the left-hand side's integration interval at $c\in(a,b)$ (where $u'(c)=0$) and by using, for each of the two integrals, the triangle inequality $\int|(f(u'))'|\geq|\int(f(u'))'|$, the boundary conditions $|u'(a+0)|=|u'(b-0)|=\infty$ and $u'(c)=0$, and also the oddness of $f$.
\end{proof}

In the case of $p=2$, the same kind of lemma is obtained in \cite[Lemma 4.3]{Miura2017}.
In addition, Henrot-Mounjid \cite{Henrot2017} study a closely related problem, which minimizes the same curvature energy with $p=2$ among convex curves of prescribed inradius $r_\mathrm{in}$; the inradius is always bounded by the half-width $r/2$.
The constants in both \cite{Henrot2017,Miura2017} are represented by using $\cos\theta$, but they are in fact the same as our constant $\tilde{c}_p$ with $p=2$ after a change of variables.
In view of this, we can also represent $\tilde{c}_p$ as
$$\tilde{c}_p = 2 \Big( 2\int_0^{\pi/2}(\cos\theta)^{\frac{p-1}{p}}d\theta \Big)^p.$$

\begin{remark}[Optimality of $\tilde{c}_p$]
  Compared to $c_p$ in Theorem \ref{thm:diameter-degeneracy}, the value of $\tilde{c}_p$ is more important because of its optimality.
  Below we briefly argue the optimality, assuming $p>1$; the case of $p=1$ is trivial.
  Let $f$ be as in the proof of Lemma \ref{lem:degeneracy}.
  Let $u:[-1,1]\to\mathbb{R}$ be the primitive function of the increasing function $f^{-1}({A}x)$, where ${A}:=\lim_{t\to\infty}f(t)\in(0,\infty)$, such that $u(0)=0$.
  Then $u$ is a symmetric convex function such that $\lim_{x\to\pm1}|u'(x)|=\infty$, and also $\lim_{x\to\pm1}|u(x)|$ is defined as a finite value because for $x\in(0,1)$ we have
  $$u(x) = \int_0^xf^{-1}({A}y)dy = \frac{1}{{A}}\int_0^{f^{-1}({A}x)}zf'(z)dz,$$
  and $zf'(z)\sim z^{\frac{1-2p}{p}}$ as $z\to\infty$, where the exponent $\frac{1-2p}{p}$ is strictly less than $-1$.
  In addition, we have the identity $(f(u'))'\equiv {A}$ so that in view of the H\"older inequality in the proof of Lemma \ref{lem:degeneracy}, it is straightforward to check that a closed convex curve made by connecting the graph curve of $u$ and its vertical reflection attains the equality in \eqref{eq13} for $r=2$.
  Notice that the resulting closed curve is of class $C^2$ (but not $C^3$); the only nontrivial point is whether the curvature is well defined where the two graph curves are connected, but in fact the curvature vanishes there since $(f(u'))'=|\kappa|^p\sqrt{1+|u'|^2}$ is constant while $|u'(x)|\to\infty$ as $|x|\to1$.
  We finally remark that when $p=2$, the graph curve of $u$ corresponds to the so-called rectangular elastica, and our closed curve coincides with the one constructed by Henrot-Mounjid.
\end{remark}

Now we turn to the proof of Theorem \ref{thm:diameter-degeneracy}.

\begin{proof}[Proof of Theorem \ref{thm:diameter-degeneracy}]
  Up to rescaling we may assume that $\di(\Sigma)=1$ and only need to prove that
  \begin{equation}\label{eq01}
    \int_\Sigma H^p > c_p \mathcal{D}^{p-1}.
  \end{equation}

  {\em Step 1.}
  Choose one direction $\omega\in\mathbb{S}^2$ such that $b_\Sigma(\omega)=1$ ($=\di(\Sigma)$).
  Up to a rigid motion, we may assume that the height function $h(q) := q\cdot\omega$ maps $\Sigma$ to $[0,1]$.
  Let $\Sigma_t$ denote the cross section $\{q\in\Sigma \mid h(q)=t\}$ for $t\in(0,1)$.
  In addition, let $\theta_\omega\in[0,\pi]$ denote the angle between $\omega$ and the outer unit normal $\nu$ of $\Sigma$, so that $\cos\theta_\omega=\nu\cdot\omega$.
  Note that $\sin\theta_\omega>0$ for $t\in(0,1)$.
  Then the co-area formula yields
  \begin{equation}\label{eq02}
    \int_\Sigma H^p d\mathcal{H}^2 = \int_0^1 \int_{\Sigma_t} \frac{H^p}{\sin\theta_\omega} d\mathcal{H}^1 dt,
  \end{equation}
  where $\mathcal{H}^d$ denotes the $d$-dimensional Hausdorff measure.
  Moreover, at any point $q\in\Sigma$ such that $t=h(q)\in(0,1)$, let $k_{\Sigma_t}$ be the inward curvature of the cross section curve $\Sigma_t$, and let $\kappa_\omega$ be the inward curvature of a (unique) curve contained in $\Sigma$ and the plane $P:=q+\mathrm{span}\{\nu(q),\omega\}$.
  Then from a simple geometric calculation we deduce
  \begin{equation}\label{eq37}
    2H = k_{\Sigma_t}\sin\theta_\omega + \kappa_\omega.
  \end{equation}
  Therefore, inserting \eqref{eq37} into \eqref{eq02}, and using the fact that $(X+Y)^p \geq X^p$ for $X,Y\geq0$ with equality only for $Y=0$, we obtain
  \begin{align}\label{eq03}
    \int_\Sigma H^p d\mathcal{H}^2 > \frac{1}{2^p} \int_0^1 \int_{\Sigma_t} k_{\Sigma_t}^p(\sin\theta_\omega)^{p-1} d\mathcal{H}^1 dt.
  \end{align}
  The strict positivity follows since otherwise $\kappa_\omega\equiv0$ but this contradicts the fact that $\Sigma$ is closed e.g.\ in view of the Gauss-Bonnet theorem.

  {\em Step 2.}
  We then prove that for every $t\in(0,1)$ and $q\in\Sigma$ with $t=h(q)$,
  \begin{equation}\label{eq05}
    \sin\theta_\omega(q) \geq g(t) := \left(1+\left(\tfrac{1}{2}-|t-\tfrac{1}{2}|\right)^{-2}\right)^{-\frac{1}{2}}.
  \end{equation}
  By symmetry we only need to argue for $t\leq\frac{1}{2}$ and prove that
  \begin{equation}\label{eq06}
    \sin\theta_\omega(q) \geq \left(1+t^{-2}\right)^{-1/2}.
  \end{equation}
  Fix $q\in\Sigma$ (and hence also $t=h(q)\leq\frac{1}{2}$).
  Up to a rigid motion, we may assume that the maximum (resp.\ minimum) of the height function $h$ is attained by $(1,0,0)$ (resp.\ $(0,0,0)$), so that $\omega=(1,0,0)$ in particular, and also that there is some function
  \begin{align}\label{eq07}
    f:[0,1]\to[0,1] \ \mbox{concave}, \quad f(0)=f(1)=0, \quad
  \end{align}
  such that $q \in G_f \subset \Sigma$, where $G_f:=\{(x,0,f(x))\in\mathbb{R}^3 \mid x\in[0,1]\}$.
  Note that the upper bound $f\leq1$ follows since $\di(\Sigma)=1$.
  Then an elementary geometry implies that $\sin\theta_\omega\geq\sin\theta_\omega'$ for the angle $\theta_\omega'$ between $\omega$ and the normal $(-f'(t),0,1)$ of $f$, that is,
  \begin{equation}\label{eq08}
    \sin\theta_\omega(q) \geq \frac{1}{\sqrt{1+|f'(t)|^2}}.
  \end{equation}
  In addition, by the geometric restriction \eqref{eq07} of $f$, we have for $t\leq\frac{1}{2}$,
  \begin{equation}\label{eq09}
    |f'(t)|\leq \frac{1}{t}.
  \end{equation}
  Indeed, $f'(t)\leq1/t$ holds since otherwise $f'(x)>1/t$ for $x\in(0,t)$ by concavity but this contradicts $f\leq1$ and $f(0)=0$; by symmetry, using $f(1)=0$, we also obtain $f'(t)\geq 1/(1-t)$; since $t\leq\frac{1}{2}$, these two estimates imply \eqref{eq09}.
  From \eqref{eq08} and \eqref{eq09} we deduce the desired \eqref{eq06}.

  {\em Step 3.}
  We finally complete the proof.
  Inserting \eqref{eq05} into \eqref{eq03}, we now obtain
  \begin{align}\label{eq38}
    \int_\Sigma H^p d\mathcal{H}^2 > \frac{1}{2^p} \int_0^1 g(t)^{p-1} \int_{\Sigma_t} k_{\Sigma_t}^p d\mathcal{H}^1 dt.
  \end{align}
  By definition of $\mathcal{D}$ (and $\di(\Sigma)=1$), we can apply Lemma \ref{lem:degeneracy} with $r=1/\mathcal{D}$ to $\Sigma_t$ to the effect that
  \begin{align}\label{eq39}
    \int_{\Sigma_t} k_{\Sigma_t}^p d\mathcal{H}^1 \geq \tilde{c}_p \mathcal{D}^{p-1},
  \end{align}
  where the right-hand side does not depend on $t$.
  Therefore, inserting \eqref{eq39} to \eqref{eq38}, we obtain the desired \eqref{eq01} for $c_p:=2^{-p}\tilde{c}_p\big(\int_0^1g(t)^{p-1}dt\big)$; this constant agrees with the one in the statement of Theorem \ref{thm:diameter-degeneracy} after simple calculations.
\end{proof}

We now estimate the degeneracy $\mathcal{D}$ to prove Theorem \ref{thm:main1}.
A key fact we use is the following scaling law (whose prefactor is not optimal).

\begin{lemma}
  For a convex surface $\Sigma$, we have
  \begin{equation}
    \di(\Sigma)\frac{A}{V} \leq 36 \mathcal{D}.
  \end{equation}
\end{lemma}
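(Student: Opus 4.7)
The plan is to bound the isoperimetric ratio $A/V$ by the reciprocal of the inradius of the convex body $K$ enclosed by $\Sigma$, and then to bound that inradius from below by a multiple of $1/\mathcal D$. By homogeneity we may assume $\di(\Sigma)=1$. Writing $r_{\mathrm{in}}$ for the inradius of $K$ and fixing a center $x_0\in K$ of an inscribed ball of radius $r_{\mathrm{in}}$, the divergence theorem applied to the vector field $x\mapsto x-x_0$ yields
\[
3V=\int_\Sigma(x-x_0)\cdot\nu\,dA\geq r_{\mathrm{in}}A,
\]
since every supporting hyperplane of $K$ lies at distance at least $r_{\mathrm{in}}$ from $x_0$. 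Hence $A/V\leq 3/r_{\mathrm{in}}$, and it suffices to establish $r_{\mathrm{in}}\geq 1/(12\,\mathcal D)$.

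The key geometric step is a comparison between the minimum width $w_{\min}:=\min_{\omega\in\mathbb S^2}b_\Sigma(\omega)$ of $K$ and the orthogonal-direction width $1/\mathcal D$ appearing in \eqref{eq40}. Fix a diameter direction $\omega_0$ and a width-minimizing direction $\omega_\ast$. Because the two diameter endpoints differ by the unit vector $\omega_0$ and their $\omega_\ast$-separation is at most $w_{\min}$, one has $|\omega_\ast\cdot\omega_0|\leq w_{\min}$. Decomposing $\omega_\ast=(\omega_\ast\cdot\omega_0)\omega_0+\omega_\ast^\perp$ with $\omega_\ast^\perp\in\omega_0^\perp$ and $|\omega_\ast^\perp|\geq\sqrt{1-w_{\min}^2}$, and setting $\omega:=\omega_\ast^\perp/|\omega_\ast^\perp|$, the sublinearity $b_\Sigma(a\omega_1+b\omega_2)\leq|a|b_\Sigma(\omega_1)+|b|b_\Sigma(\omega_2)$ gives
\[
b_\Sigma(\omega)\leq\frac{b_\Sigma(\omega_\ast)+|\omega_\ast\cdot\omega_0|\,b_\Sigma(\omega_0)}{|\omega_\ast^\perp|}\leq\frac{2w_{\min}}{\sqrt{1-w_{\min}^2}}.
\]
Since $\omega\perp\omega_0$, the definition \eqref{eq40} forces $b_\Sigma(\omega)\geq 1/\mathcal D$, and combined with the trivial bound $\mathcal D\geq 1$ this yields the uniform estimate $w_{\min}\geq 1/(2\sqrt 2\,\mathcal D)$ (the case $w_{\min}>1/\sqrt 2$, where $\sqrt{1-w_{\min}^2}$ is small, is absorbed automatically since $1/\sqrt 2>1/(2\sqrt 2)\geq 1/(2\sqrt 2\,\mathcal D)$).

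Finally, invoking the classical Steinhagen inequality $r_{\mathrm{in}}\geq w_{\min}/4$ in $\mathbb R^3$, a convenient weakening of the sharp bound $\sqrt 5\,w_{\min}/8$, produces $r_{\mathrm{in}}\geq 1/(8\sqrt 2\,\mathcal D)$, and therefore $A/V\leq 24\sqrt 2\,\mathcal D<36\,\mathcal D$. The main obstacle in this program is the width-comparison step: one must verify that a direction of near-minimum width is nearly orthogonal to every diameter direction, which ultimately reduces to the elementary observation that the diameter endpoints have small separation along any small-width direction.
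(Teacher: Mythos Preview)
Your argument is correct and yields the stated bound (indeed with a slightly better constant $24\sqrt{2}$). The one quibble is the parenthetical value $\sqrt{5}/8$ for the sharp Steinhagen constant in $\mathbb{R}^3$; the extremal body is the regular tetrahedron, giving $r_{\mathrm{in}}\geq w_{\min}/(2\sqrt{3})$, but since $1/4<1/(2\sqrt{3})$ your weakened form $r_{\mathrm{in}}\geq w_{\min}/4$ is valid and the proof stands.

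Your route, however, is quite different from the paper's. The paper never passes through the inradius or Steinhagen's inequality. Instead, after normalizing $\di(\Sigma)=1$ and fixing a diameter direction $\omega_0$, it introduces an auxiliary scale $\rho_1=\max\{b_\Sigma(\omega):\omega\perp\omega_0\}$ and separately proves $A\leq 6\rho_1$ and $V\geq r\rho_1/6$ with $r=1/\mathcal{D}$. The area bound comes from enclosing $\Sigma$ in a rectangular box of side lengths $1,\rho_1,\rho_2$ (with $\rho_2\leq\rho_1$) and invoking area monotonicity for convex surfaces; the volume bound comes from an explicit inscribed polyhedron, namely the convex hull of six contact points of $\Sigma$ with the box faces, whose volume is computed to be exactly $\rho_1\rho_2/6$. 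Your approach is more conceptual, factoring through the classical chain $A/V\lesssim 1/r_{\mathrm{in}}\lesssim 1/w_{\min}\lesssim\mathcal{D}$, and in particular your width-comparison lemma (showing the minimizing-width direction is nearly orthogonal to any diameter direction) is a clean way to reconcile $w_{\min}$ with the paper's slightly nonstandard $\mathcal{D}$. The paper's approach is more elementary and self-contained, avoiding the citation of Steinhagen, but at the cost of the ad hoc polyhedron construction.
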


\begin{proof}
  Up to rescaling we may assume that $\di(\Sigma)=1$.
  Let $r:=1/\mathcal{D}$ for notational simplicity.
  Fixing a diameter direction $\omega_0\in\mathbb{S}^2$ such that $b_\Sigma(\omega_0)=1$, we let $\rho_1\in[r,1]$ denote the maximal width among all directions orthogonal to $\omega_0$, that is, $\rho_1:=\max\{b_\Sigma(\omega)\mid \omega\cdot\omega_0=0\}$, and $\omega_1\in\mathbb{S}^2$ be a maximizer so that $b_\Sigma(\omega_1)=\rho_1$ and $\omega_1\cdot\omega_0=0$.
  We now separately prove
  \begin{equation}\label{eq12}
    A(\Sigma) \leq 6\rho_1 \quad \mbox{and} \quad V(\Sigma) \geq \frac{r\rho_1}{6},
  \end{equation}
  which immediately imply $A/V \leq 36/r = 36 \mathcal{D}$.

  To this end we use the fact that there exists a rectangular (convex body) $Q\subset\mathbb{R}^3$ containing $\Sigma$ such that each side is perpendicular to one of $\omega_0,\omega_1,\omega_2$, where $\omega_2\in\mathbb{S}^2$ is chosen to be orthogonal to both $\omega_0$ and $\omega_1$, and such that the side-lengths of $Q$ are $1,\rho_1,\rho_2$, where $\rho_2:=b_\Sigma(\omega_2)\in[r,\rho_1]$.
  Then the first estimate in \eqref{eq12} follows by the area-monotonicity of convex surfaces that $\Sigma\subset Q \Rightarrow A(\Sigma)\leq A(\partial Q)$, which combined with $\rho_2\leq\rho_1\leq1$ implies that $A(\Sigma)\leq A(\partial Q)=2(\rho_2+\rho_1+\rho_1\rho_2)\leq 6\rho_1$.
  For the second estimate in \eqref{eq12} we further use the fact that $\Sigma$ touches all sides of $Q$.
  More precisely, assuming without loss of generality that $\omega_0,\omega_1,\omega_2$ form the standard basis of $\mathbb{R}^3$ and that $Q=[0,1]\times[0,\rho_1]\times[0,\rho_2]$, we can find points in $\Sigma\cap\partial Q$ of the form
  \begin{equation}\label{eq41}
    (0,a_1,a_2),(1,a_1',a_2'),(a_3,0,a_4),(a_3',\rho,a_4'),(a_5,a_6,0),(a_5',a_6',r)\in\Sigma\cap\partial Q.
  \end{equation}
  In addition, since $\omega_0$ and $\omega_1$ are defined via maximization, we have
  \begin{equation}\label{eq42}
    a_i=a_i' \quad \mbox{for}\ i=1,2,4.
  \end{equation}
  Then the polyhedron $P$ defined by the convex hull of the points in \eqref{eq41} is enclosed by $\Sigma$, and in addition under the constraint \eqref{eq42} we deduce from a direct computation that the enclosed volume of $P$ is $\rho_2\rho_1/6$, so that $V(\Sigma)\geq \rho_2\rho_1/6 \geq r\rho_1/6$.
  The proof is complete.
\end{proof}

We are now in a position to complete the proof of Theorem \ref{thm:main1}.

\begin{proof}[Proof of Theorem \ref{thm:main1}]
  By Theorem \ref{thm:diameter-degeneracy} with $p=2$ and Lemma \ref{lem:degeneracy}, we obtain
  $$\di(\Sigma)\frac{A}{V} \leq \frac{36}{c_2} \int_\Sigma H^2.$$
  In addition, since the total mean curvature of a convex surface can be represented by the mean width, namely $\int_\Sigma H = 2\pi B$, where $B = \frac{1}{|\mathbb{S}^2|}\int_{\mathbb{S}^2}b_\Sigma(\nu)dS(\nu)$ (cf.\ (19) \& (22) in \cite[Chapter~4]{Burago1988}), and since $b_\Sigma(\nu)\leq \di(\Sigma)$ in every direction $\nu$ by definition of diameter, we have
  \begin{equation}\label{eq35}
    \int_{\Sigma}H \leq 2\pi\di(\Sigma),
  \end{equation}
  completing the proof with $C=72\pi/c_2$.
  (As $c_2\geq 2/3$, we can take $C=108\pi$.)
\end{proof}

In the rest of this section we briefly observe that $\mathcal{D}$ can be also related with other scale invariant quantities with optimal exponents; the reader may skip this part as these estimates are not used any other part of this paper.
For notational simplicity, we let $d:=\di(\Sigma)$ and $M:=\int_\Sigma H$.

\if0
We first recall some well-known relations:
\begin{equation}\label{eq29}
  V^\frac{1}{3}\lesssim A^\frac{1}{2} \lesssim d \sim W_1 \lesssim A^\frac{p-1}{p}W_p^\frac{1}{p}.
\end{equation}
Indeed, the first one is just the isoperimetric inequality (thus valid for an arbitrary closed surface); the second one is based on convexity and follows by e.g.\ Minkowski's inequality; 
the part $d\lesssim W_1$ of the third one follows by Topping's inequality (thus valid for an arbitrary connected closed surface), while the converse $d\gtrsim W_1$ needs convexity and follows by the mean-width representation of $W_1$; the last one is just the H\"{o}lder inequality for $W_1$ and hence does not use convexity.
Note that all the terms are not scale invariant but of the same scale.

Now we turn to the degeneracy $\mathcal{D}$.
Recall that what we have gained in Theorem \ref{thm:diameter-degeneracy} is
\begin{equation}\label{eq30}
  \mathcal{D}^{p-1} \lesssim d^{p-2}W_p.
\end{equation}
\fi

We begin with indicating that in fact the converse of Lemma \ref{lem:degeneracy} also holds, i.e.,
\begin{equation}\label{eq31}
  \mathcal{D} \sim d\frac{A}{V} \sim M\frac{A}{V}.
\end{equation}
Indeed, again letting $d=1$, $r=1/\mathcal{D}$, and $\omega_0$ be such that $b_\Sigma(\omega_0)=1$, if we choose $\omega_2$ to be attaining the minimal width so that $b_{\Sigma}(\omega_2)=r=:\rho_2$, and $\omega_1$ to be orthogonal to both $\omega_0$ and $\omega_2$, and write $\rho_1=b_{\Sigma}(\omega_1)$, and in addition if we similarly take a rectangular $Q$ and a polyhedron $P$ to the proof of Lemma \ref{lem:degeneracy}, then we have $A(\Sigma)\geq A(\partial P) \gtrsim \rho_1$ and $V(\Sigma)\leq V(\partial Q) \lesssim \rho_1\rho_2=\rho_1r$ so that $A/V \gtrsim 1/r$.
Therefore, after retrieving $d$ and combining with Lemma \ref{lem:degeneracy}, we obtain the first relation in \eqref{eq31}.
The second one follows by the fact that $d\sim M$ holds under convexity; in fact, we have $d \lesssim M$ (even without convexity by Topping's inequality \cite{Topping2008}), while \eqref{eq35} implies that $M \lesssim d$ under convexity.

Next we focus on the isoperimetric ratio $I=A^\frac{3}{2}/V$, for which we have
\begin{equation}\label{eq32}
  I \lesssim \mathcal{D} \lesssim I^2.
\end{equation}
The first one is already observed, cf.\ \eqref{eq31} and \eqref{eqMinkowski}, while the second one follows since $3VM\leq A^2$ (cf.\ \cite[p.145]{Burago1988}).
Note that both sides in \eqref{eq32} are optimal because for a pancake-like (resp.\ cigar-like) surface $\Sigma_\varepsilon$, we have $I=O(\varepsilon^{-1})\sim\mathcal{D}$ (resp.\ $I^2=O(\varepsilon^{-1})\sim\mathcal{D}$).

We finally indicate that the ratio $R:=r_\mathrm{out}/r_\mathrm{in}$, where $r_\mathrm{out}$ and $r_\mathrm{in}$ are the circumradius and the inradius, respectively, is completely comparable with $\mathcal{D}$:
\begin{equation}\label{eq36}
  R \sim \mathcal{D}.
\end{equation}
Indeed, assuming that $d=1$ up to rescaling, we obviously have $r_\mathrm{out}\sim 1$ and $r_\mathrm{in}\leq 1/\mathcal{D}$, and hence $\mathcal{D} \lesssim R$; in addition, since the polyhedron $P$ in the proof of Lemma \ref{lem:degeneracy} encloses a ball of radius $\sim 1/\mathcal{D}$, 
we also have $\mathcal{D} \gtrsim 1/r_\mathrm{in}\sim R$.

\section{Topping's conjecture for axisymmetric surfaces}\label{sec:Topping}

In this section we prove Theorem \ref{thm:Topping}, namely Topping's conjecture with a rigidity estimate.
Throughout this section, for notational simplicity, we let
$$M(\Sigma):=\int_\Sigma|H|.$$
Since we focus on axisymmetric surfaces, we may hereafter assume the following

\begin{hypothesis}[Simply-connected axisymmetric surface]\label{hyp:axisymmetric}
  A surface $\Sigma$ is represented by using an immersed $C^{1,1}$ plane curve $\gamma=(x,z):[0,L]\to\mathbb{R}^2$ as
  $$\Sigma = \left\{ \big( x(s)\cos\phi,x(s)\sin\phi,z(s) \big) \in\mathbb{R}^3 \mid 0\leq s\leq L,\ 0\leq\phi<2\pi \right\},$$
  where $\gamma$ is parametrized by the arclength $s\in[0,L]$, and satisfies $x(0)=z(0)=x(L)=0$, $z(L)\geq0$,
  $\gamma_s(0)=(1,0)$, $\gamma_s(L)=(-1,0)$, and also $x(s)>0$ for any $s\in(0,L)$.
  (The notation $\gamma_s$ means the derivative with respect to $s$.)
  For such a surface, throughout this section, we let $\theta:[0,L]\to\mathbb{R}$ denote a unique Lipschitz function such that for $s\in[0,L]$,
  \begin{equation*}
    x(s)=\int_0^s\cos\theta(t)dt, \quad z(s)=\int_0^s\sin\theta(t)dt,
  \end{equation*}
  and such that $\theta(0)=0$.
  Note that $\theta(L)\in\pi+2\pi\mathbb{Z}$.
\end{hypothesis}

Our strategy is to reduce the problem into the convex one by constructing, for a given $\Sigma$ satisfying Hypothesis \ref{hyp:axisymmetric}, a comparison axisymmetric convex surface $\Sigma'$ such that $M(\Sigma)\geq M(\Sigma')$ and $\di(\Sigma)\leq\di(\Sigma')$.
To this end, following the strategy in \cite{Dalphin2016}, we perform two kinds of rearrangement.
The first one is to make the curve not going down vertically, while keeping the horizontal behavior.
The second one is to make the curve convex by rearranging the tangential angle.
The reason why we weaken the regularity of $\Sigma$ to $C^{1,1}$ in Hypothesis \ref{hyp:axisymmetric} is that the best regularity retained in these rearrangements is the Lipschitz continuity of $\theta$, that is the $C^{1,1}$-regularity of $\Sigma$.
(Notice that the $C^{1,1}$-regularity is enough for defining $M$ since $C^{1,1}=W^{2,\infty}$.)

\subsection{First rearrangement}\label{subsec:first}

Given $\theta$ as in Hypothesis \ref{hyp:axisymmetric}, we define $\theta^\sharp$ by
\begin{equation}\label{eq:firstrearrangement}
  \theta^\sharp(s):=\mathrm{dist}(\theta(s),2\pi\mathbb{{Z}}) \quad \mbox{for}\ s\in[0,L],
\end{equation}
so that $\theta^\sharp([0,L])=[0,\pi]$.
Note that $\theta^\sharp$ is Lipschitz, $\theta^\sharp(0)=0$ and $\theta^\sharp(L)=\pi$.
For the corresponding curve $\gamma_\sharp=(x_\sharp,z_\sharp)$ starting from the origin, we have $x_\sharp(L)=0$ as $\cos\theta^\sharp=\cos\theta$, and $z_\sharp(L)>0$ as $\sin\theta^\sharp\geq0$ and $\sin\theta^\sharp\not\equiv0$.
Hence the corresponding surface $\Sigma_\sharp$ still satisfies Hypothesis \ref{hyp:axisymmetric}.

For the first rearrangement we have
\begin{equation}\label{eq:comparison1}
  M(\Sigma)=M(\Sigma_\sharp), \quad \di(\Sigma)\leq\di(\Sigma_\sharp).
\end{equation}
Since the equality for $M$ is already known (cf. \cite{Dalphin2016} or Appendix \ref{app:meancurvature}), we only need to check the diameter control, which is not difficult.

\begin{lemma}[Diameter control: First rearrangement]\label{lem:diameterfirst}
  Let $\Sigma$ satisfy Hypothesis \ref{hyp:axisymmetric}, and let $\Sigma_\sharp$ be obtained by the first rearrangement of $\Sigma$.
  Then $\di(\Sigma)\leq\di(\Sigma_\sharp)$.
\end{lemma}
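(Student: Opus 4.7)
The plan is to reduce computation of the extrinsic diameter of an axisymmetric surface to an explicit supremum over pairs of arclength parameters of the generating curve, and then derive the inequality from the pointwise effect of the rearrangement on $\sin\theta$.

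First I would establish the diameter formula
\begin{equation*}
\di(\Sigma)^2 = \sup_{s, s' \in [0,L]} \Big\{ (x(s) + x(s'))^2 + (z(s) - z(s'))^2 \Big\}.
\end{equation*}
Any two points on $\Sigma$ have the form $(x(s)\cos\phi_1, x(s)\sin\phi_1, z(s))$ and $(x(s')\cos\phi_2, x(s')\sin\phi_2, z(s'))$, whose squared distance equals $x(s)^2 + x(s')^2 - 2x(s)x(s')\cos(\phi_1 - \phi_2) + (z(s) - z(s'))^2$; since $x \geq 0$, this is maximized in the azimuthal variables at $\phi_1 - \phi_2 = \pi$, yielding the displayed formula. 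The identical formula applies to $\Sigma_\sharp$ with $(x_\sharp, z_\sharp)$ in place of $(x,z)$.

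Next I would verify the pointwise identities $\cos\theta^\sharp(s) = \cos\theta(s)$ and $\sin\theta^\sharp(s) = |\sin\theta(s)|$ for a.e.\ $s$. Writing $\theta(s) = 2\pi k(s) + \alpha(s)$ with $\alpha(s) \in [-\pi,\pi]$ and $k(s) \in \mathbb{Z}$ realizing the distance in \eqref{eq:firstrearrangement}, we have $\theta^\sharp(s) = |\alpha(s)| \in [0,\pi]$, so $\cos\theta^\sharp = \cos|\alpha| = \cos\alpha = \cos\theta$ and $\sin\theta^\sharp = \sin|\alpha| = |\sin\alpha| = |\sin\theta|$. Consequently $x_\sharp \equiv x$, and for every $0 \leq s' \leq s \leq L$,
\begin{equation*}
|z_\sharp(s) - z_\sharp(s')| = \int_{s'}^{s} |\sin\theta(t)|\,dt \geq \Big| \int_{s'}^{s} \sin\theta(t)\,dt \Big| = |z(s) - z(s')|.
\end{equation*}

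Finally, inserting these relations into the diameter formula gives, for every pair $(s,s') \in [0,L]^2$,
\begin{equation*}
(x_\sharp(s) + x_\sharp(s'))^2 + (z_\sharp(s) - z_\sharp(s'))^2 \geq (x(s) + x(s'))^2 + (z(s) - z(s'))^2,
\end{equation*}
and taking the supremum yields $\di(\Sigma_\sharp) \geq \di(\Sigma)$. I do not foresee any real obstacle here: the argument amounts to the axisymmetric diameter formula combined with the elementary observation that the rearrangement replaces $\sin\theta$ by $|\sin\theta|$, which can only increase the vertical spread of the generating curve while leaving its horizontal profile $x(\cdot)$ unchanged.
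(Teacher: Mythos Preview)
Your proof is correct and follows essentially the same approach as the paper's. The paper phrases the diameter formula via the reflection $R:(x,z)\mapsto(-x,z)$ applied to the generating curve, which is exactly your formula $(x(s)+x(s'))^2+(z(s)-z(s'))^2$; it then uses the same two observations $x_\sharp\equiv x$ and $\sin\theta^\sharp=|\sin\theta|$ to conclude $|z_\sharp(s)-z_\sharp(s')|\geq|z(s)-z(s')|$.
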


\begin{proof}
  Notice the general fact for an axisymmetric surface that, using the reflection operator $R:(x,z)\mapsto(-x,z)$, we can represent $\di(\Sigma)$ by the maximum of $\mathrm{dist}(\gamma(s_1),R\gamma(s_2))$ over $s_1,s_2\in[0,L]$.
  Notice that $x\equiv x_\sharp$ holds since $\cos\theta\equiv\cos\theta^\sharp$ follows by definition of the first rearrangement.
  Therefore, it now suffices to show that the vertical distance of any two points does not contract, i.e.,
  \begin{equation*}
    |z_\sharp(s_2)-z_\sharp(s_1)| \geq |z(s_2)-z(s_1)| \quad \mbox{for}\ 0\leq s_1<s_2 \leq L.
  \end{equation*}
  This follows by the fact that $\sin\theta^\sharp=|\sin\theta|$ so that
  $$|z_\sharp(s_2)-z_\sharp(s_1)|=\big|\int_{s_1}^{s_2}\sin\theta^\sharp\big|=\int_{s_1}^{s_2}|\sin\theta| \geq \big|\int_{s_1}^{s_2}\sin\theta\big|=|z(s_2)-z(s_1)|.$$
  The proof is complete.
\end{proof}

\subsection{Second rearrangement}\label{subsec:second}

Given $\theta$ as in Hypothesis \ref{hyp:axisymmetric} such that $\theta([0,L])=[0,\pi]$, we define $\theta^*:[0,L]\to[0,\pi]$ by the standard nondecreasing rearrangement of $\theta$:
\begin{equation}\label{eq:secondrearrangement}
  \theta^*(s) = \sup\{ c \in[0,\pi] \mid s \geq L-m(\{\theta\geq c\}) \} \quad \mbox{for}\ s\in[0,L],
\end{equation}
where $m(\{\theta\geq c\})$ means the Lebesgue measure of $\{s\in[0,L] \mid \theta(s)\geq c\}$ (see e.g.\ \cite{Kawohl1985} for details of the rearrangement argument).
Then the resulting surface $\Sigma_*$ is clearly convex as $\theta^*$ is monotone.
In addition, $\Sigma_*$ still satisfies Hypothesis \ref{hyp:axisymmetric}; indeed, thanks to well-known properties of the rearrangement, the function $\theta^*$ inherits the Lipschitz continuity of $\theta$ \cite[Lemma 2.3]{Kawohl1985}, and the corresponding curve $\gamma_*=(x_*,z_*)$ starting from the origin retains all the boundary conditions, i.e.,
\begin{equation}
  \theta^*(0)=\theta(0)=0,\quad \theta^*(L)=\theta(L)=\pi, \quad \gamma_*(0)=\gamma(0),\quad \gamma_*(L)=\gamma(L),
\end{equation}
where in particular the last condition follows by the integration-preserving property: $\int_0^Lf(\theta(s))ds=\int_0^Lf(\theta^*(s))ds$ for any continuous function $f$ \cite[p.22, (C)]{Kawohl1985}.

For the second rearrangement we have
\begin{equation}\label{eq:comparison2}
  M(\Sigma)\geq M(\Sigma_*), \quad \di(\Sigma)\leq\di(\Sigma_*).
\end{equation}
Here the remaining task is again only to establish the diameter control (cf.\ \cite{Dalphin2016} or Appendix \ref{app:meancurvature}).
This second diameter control needs a more delicate argument, but it turns out that the following fine property holds.

\begin{lemma}[Diameter control: Second rearrangement]\label{lem:diameter}
  Let $\Sigma$ satisfy Hypothesis \ref{hyp:axisymmetric}.
  Suppose that $\theta([0,L])=[0,\pi]$.
  Let $\Sigma_*$ be the convex surface obtained by the second rearrangement of $\Sigma$.
  Then $\Sigma_*$ encloses $\Sigma$.
  In particular, $\di(\Sigma_*)\geq\di(\Sigma)$.
\end{lemma}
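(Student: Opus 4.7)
The plan is to prove the stronger planar statement that the generating curve $\gamma$ lies in the closed convex region $\Omega_*$ bounded by $\gamma_*$ together with the axis segment $\{0\}\times[0,Z]$, where $Z=z(L)=z_*(L)$; enclosure of $\Sigma$ by $\Sigma_*$ in $\mathbb{R}^3$ then follows from axisymmetry, and $\di(\Sigma)\leq\di(\Sigma_*)$ is immediate because the enclosed solid is then convex.

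The key step is to express $\gamma(s)\in\Omega_*$ through the supporting half-planes of $\Omega_*$. Since $\theta^*$ is non-decreasing on $[0,L]$ with values in $[0,\pi]$, as $\alpha$ ranges over $[0,\pi]$ the point of $\gamma_*$ supporting the direction $(\sin\alpha,-\cos\alpha)$ is $\gamma_*(s^*)$ with $s^*=m(\{\theta\leq\alpha\})$, so that $\{\theta^*\leq\alpha\}=[0,s^*]$. Writing $x(s)=\int_0^s\cos\theta$, $z(s)=\int_0^s\sin\theta$, and using $\sin(\alpha-\theta)=\sin\alpha\cos\theta-\cos\alpha\sin\theta$, the supporting half-plane inequality
\begin{equation*}
\sin\alpha\cdot x(s)-\cos\alpha\cdot z(s) \;\leq\; \sin\alpha\cdot x_*(s^*)-\cos\alpha\cdot z_*(s^*)
\end{equation*}
becomes
\begin{equation*}
\int_0^s\sin(\alpha-\theta(t))\,dt \;\leq\; \int_0^{s^*}\sin(\alpha-\theta^*(t))\,dt.
\end{equation*}
The remaining boundary directions of $\Omega_*$ are easy: the $z$-axis side yields the constraint $x\geq 0$, which holds by Hypothesis \ref{hyp:axisymmetric}, while the limiting directions $\alpha=0,\pi$ yield $0\leq z(s)\leq Z$, which follows from $\sin\theta\geq 0$.

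The main calculation is verifying the displayed integral inequality for every $s,\alpha$. On $\{\theta>\alpha\}$ one has $\alpha-\theta\in[\alpha-\pi,0)$, hence $\sin(\alpha-\theta)\leq 0$; dropping this subset of $[0,s]$ gives $\int_0^s\sin(\alpha-\theta)\leq\int_{[0,s]\cap\{\theta\leq\alpha\}}\sin(\alpha-\theta)$, and enlarging the domain (where now the integrand is $\geq 0$ since $\alpha-\theta\in[0,\alpha]\subset[0,\pi]$) upgrades this to $\int_{\{\theta\leq\alpha\}}\sin(\alpha-\theta)$. Applying the equimeasurability $\int_0^L F(\theta)=\int_0^L F(\theta^*)$ to the continuous truncation $F(\xi):=\sin(\alpha-\xi)\chi_{\{\xi\leq\alpha\}}$ (which is continuous at $\xi=\alpha$ because both branches vanish) identifies this upper bound with $\int_{\{\theta^*\leq\alpha\}}\sin(\alpha-\theta^*)=\int_0^{s^*}\sin(\alpha-\theta^*)$, as required.

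The conceptual obstacle is finding the right reformulation: a direct attempt to compare $\gamma$ and $\gamma_*$ as graphs over the $z$-axis via the ODE $dx/dz=\cot\theta$ is awkward because $\cot\theta$ is not a rearrangement of $\cot\theta^*$ in any useful sense along the height variable. Switching to supporting lines collapses all test directions to a single scalar rearrangement inequality for $\sin(\alpha-\theta)$, which then falls to a standard bathtub argument combined with the equimeasurability of $\theta$ and $\theta^*$.
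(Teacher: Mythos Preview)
Your proof is correct and takes a genuinely different route from the paper's. The paper fixes an arbitrary $s_\dagger\in[0,L]$, introduces the auxiliary rearrangement $\theta^\dagger:=(\theta|_{[0,s_\dagger]})^*$, and uses the order relation $\theta^\dagger\geq\theta^*$ on $[0,s_\dagger]$ to compare the coordinates of the corresponding convex curve $\gamma_\dagger$ with those of $\gamma_*$ directly; this forces a case split at the threshold $\pi/2$ (where $\cos$ changes sign) together with an elementary monotonicity property of the epigraph $G_*^+$. You replace this pointwise comparison by the support-function description of the convex region $\Omega_*$: testing against the outward normal $(\sin\alpha,-\cos\alpha)$ collapses the entire enclosure statement to the single scalar inequality $\int_0^s\sin(\alpha-\theta)\leq\int_{\{\theta\leq\alpha\}}\sin(\alpha-\theta)$, which follows immediately from the sign of $\sin(\alpha-\theta)$ on the two level sets and equimeasurability. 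Your argument is shorter, avoids the auxiliary curve $\gamma_\dagger$ and the $\pi/2$ case analysis, and makes the role of convexity (through supporting lines) transparent; the paper's argument, by contrast, yields as a by-product the stronger intermediate fact that every truncated rearranged curve $\gamma_\dagger([0,s_\dagger])$ already lies in $G_*^+$, which may be of independent use.
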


\begin{proof}
  {\em Step 1.}
  We first reduce the problem by using symmetry.
  Fix a unique point $\bar{s}\in[0,L]$ such that $\theta^*(\bar{s})=\pi/2$ and such that $\theta^*(s)<\pi/2$ for every $s<\bar{s}$; in other words, $\bar{s}$ is the first point where $x_*$ attains the maximum.
  Then we can represent the convex curve $\gamma_*$ (corresponding to $\Sigma_*$) on the restricted interval $[0,\bar{s}]$ by a graph curve, namely,
  \begin{equation}\label{eq19}
    \begin{split}
      \mbox{there is a nondecreasing convex function}\ U_* \ \mbox{such that}\\
      \gamma_*([0,\bar{s}]) = G_* :=\{(x,U_*(x))\in\mathbb{R}^2 \mid x\in[0,x_*(\bar{s})]\}.
    \end{split}
  \end{equation}
  Notice that by symmetry it is sufficient for Lemma \ref{lem:diameter} to prove that the image of $\gamma$ is included in the epigraph of $U_*$:
  \begin{equation}\label{eq20}
    \gamma([0,L]) \subset G_*^+:=\{(x,z)\in\mathbb{R}^2 \mid z\geq U_*(x),\ x\in[0,x_*(\bar{s})] \}.
  \end{equation}
  Indeed, if we establish \eqref{eq20}, then since the procedures of rearrangement and vertical reflection are commutative, using \eqref{eq20} also for the reflected curve $\tilde{\gamma}(s)=(x(L-s),z(L)-z(L-s))$, we find that the same kind of inclusion as \eqref{eq20} also holds for the subgraph of the upper-part of $\gamma_*$, so that the desired assertion holds.

  For later use we put down an elementary geometric property of $G_*^+$, cf.\ \eqref{eq19}:
  \begin{equation}\label{eq22}
    (x,z)\in G_*^+ \quad \Longrightarrow \quad \Big( 0\leq x'\leq x,\ z'\geq z\ \Rightarrow (x',z')\in G_*^+ \Big).
  \end{equation}

  {\em Step 2.}
  Now we prove \eqref{eq20}; more precisely, we fix an arbitrary $s_\dagger\in[0,L]$ and prove that $(x(s_\dagger),z(s_\dagger))\in G_*^+$.
  Let $\theta^\dagger:=(\theta|_{[0,s_\dagger]})^*$, i.e., the nondecreasing rearrangement of the restriction $\theta|_{[0,s_\dagger]}$.
  Let $\gamma_\dagger=(x_\dagger,z_\dagger)$ be the corresponding convex curve defined on $[0,s_\dagger]$, which in particular satisfies
  \begin{equation}\label{eq23}
    \gamma_\dagger(0)=(0,0), \quad \gamma_\dagger(s_\dagger)=\gamma(s_\dagger),
  \end{equation}
  by the integration-preserving property of rearrangement.
  Thus we only need to prove that $\gamma_\dagger(s_\dagger)\in G_*^+$.
  In what follows we prove the stronger assertion that
  \begin{equation}\label{eq55}
    \gamma_\dagger([0,s_\dagger])\subset G_*^+.
  \end{equation}

  We first notice that as the general property of rearrangement,
  \begin{equation}\label{eq15}
    \theta^\dagger(s) \geq \theta^*(s) \quad \mbox{for every} \ s\in[0,s_\dagger].
  \end{equation}
  Indeed, letting $\varphi:=\theta\chi_{[0,s_\dagger]}+\pi\chi_{(s_\dagger,L]}$, where $\chi$ denotes the characteristic function, we have $\varphi\geq\theta$ on $[0,L]$ and hence $\varphi^*\geq\theta^*$ by the order-preserving property \cite[p.21, (M1)]{Kawohl1985}, and also $\theta^\dagger(s)=\varphi^*(s)$ for $s\in[0,s_\dagger]$.

  Using \eqref{eq15}, we prove that
  \begin{equation}\label{eq16}
    0 \leq x_\dagger(s) \leq x_*(s) \quad \mbox{for every} \ s\in[0,s_\dagger].
  \end{equation}
  Indeed, since $\cos\theta$ is decreasing on $[0,\pi]$, and since $\theta^\dagger$ is nondecreasing on $[0,s_\dagger]$ and valued into $[0,\pi]$, the function $x_\dagger(s)=\int_0^s\cos\theta^\dagger$ is concave on $[0,s_\dagger]$ and hence $x_\dagger(s)\geq\min\{x_\dagger(0),x_\dagger(s_\dagger)\}\geq0$, cf.\ \eqref{eq23}.
  In addition, thanks to \eqref{eq15}, we have $x_\dagger(s)=\int_0^s\cos\theta^\dagger\leq\int_0^s\cos\theta^*=x_*(s)$, completing the proof of \eqref{eq16}.

  We now prove \eqref{eq55} by considering the behaviors of $z_*$ and $z_\dagger$.
  Below we separately consider the two intervals $[0,\sigma_\dagger]$ and $[\sigma_\dagger,s_\dagger]$, where $\sigma_\dagger\in[0,s_\dagger]$ denotes the maximal $s\in[0,s_\dagger]$ such that $\theta_\dagger(s)\leq\pi/2$.
  (Note that $\sigma_\dagger$ may coincide with $s_\dagger$.)
  Concerning the first interval $[0,\sigma_\dagger]$, we have
  \begin{equation}\label{eq17}
    z_\dagger(s) \geq z_*(s) \quad \mbox{for every} \ s\in[0,\sigma_\dagger].
  \end{equation}
  Indeed, if $s\leq\sigma_\dagger$, then $\theta^*([0,s])\subset\theta^\dagger([0,s])\subset[0,\pi/2]$ by \eqref{eq15} and by definition of $\sigma_\dagger$; hence, by \eqref{eq15} and by the fact that $\sin\theta$ is increasing on $[0,\pi/2]$, we obtain $z_\dagger(s)=\int_0^s\sin\theta^\dagger\geq\int_0^s\sin\theta^*=z_*(s)$, completing the proof of \eqref{eq17}.
  Therefore, by using \eqref{eq16}, \eqref{eq17}, and the obvious inclusion $\gamma_*([0,\sigma_\dagger]) \subset G_*^+$, we deduce from the geometric property \eqref{eq22} that
  \begin{equation}\label{eq24}
    \gamma_\dagger([0,\sigma_\dagger])\subset G_*^+.
  \end{equation}
  Concerning the remaining part $[\sigma_\dagger,s_\dagger]$, since $\theta^\dagger([\sigma_\dagger,s_\dagger])\subset[\pi/2,\pi]$ by definition of $\sigma_\dagger$, we have $(x_\dagger)_s=\cos\theta^\dagger\leq0$ and $(z_\dagger)_s=\sin\theta^\dagger\geq0$ on $[\sigma_\dagger,s_\dagger]$, and hence
  $$x_\dagger(s)\leq x_\dagger(\sigma_\dagger), \quad z_\dagger(s)\geq z_\dagger(\sigma_\dagger) \quad \mbox{for every} \ s\in[\sigma_\dagger,s_\dagger].$$
  Using this property with the facts that $x_\dagger\geq0$, cf.\ \eqref{eq16}, and that $(x_\dagger(\sigma_\dagger),z_\dagger(\sigma_\dagger))\in G_*^+$, cf.\ \eqref{eq24}, we deduce from the geometric property \eqref{eq22} that
  $\gamma_\dagger([\sigma_\dagger,s_\dagger])\subset G_*^+.$
  This combined with \eqref{eq24} implies \eqref{eq55}, thus completing the proof.
\end{proof}

\subsection{Rigidity estimates}\label{subsec:rigidity}

Topping's conjecture itself is now already proved for simply-connected axisymmetric surfaces by the above two subsections.
In this final subsection we establish Theorem \ref{thm:Topping} by giving a more quantitative analysis of the rearrangements.
More precisely, we prove that the deficit $\frac{M(\Sigma)}{\di(\Sigma)}-\pi$ is bounded below by the sum of quantities measuring ``axial expansion in the first rearrangement'' and ``coaxial deviation after the second rearrangement''.

We first prepare Lemma \ref{lem:convexmeanwidth} below about ``coaxial deviation'' in the framework of convex geometry.
To this end we briefly recall some classical facts in convex geometry.
Since $M$ has the mean-width representation, $M$ is naturally defined even for singular (Lipschitz) convex surfaces, and moreover $M$ has strict monotonicity with respect to the inclusion property for enclosed convex sets.
In addition, we can also define $M$ even for a degenerate convex body $K$ (of dimension $\leq2$) by $M(K):=\lim_{\varepsilon\to0}M(\partial K_\varepsilon)$, where $K_\varepsilon:=\{x\in\mathbb{R}^3 \mid \mathrm{dist}(x,K)\leq\varepsilon\}$ denotes the $\varepsilon$-neighborhood of $K$.
For example, for a segment $S$ of length $\ell$, we have $M(S)=\pi\ell$.
Notice that the above monotonicity is valid even for degenerate objects.
These facts in particular imply \eqref{eqTopping} for any convex surface $\Sigma$; indeed, if we take a segment $S$ attaining the diameter of $\Sigma$, then from the monotonicity of $M$ and the fact that $\di(\Sigma)=\di(S)$ we deduce that
$$\frac{M(\Sigma)}{\di(\Sigma)}>\frac{M(S)}{\di(S)}=\pi.$$
Therefore, in order to obtain a lower bound for the deficit $\frac{M(\Sigma)}{\di(\Sigma)}-\pi$, it is natural to look at the quantity $M(\Sigma)-M(S)$.

We are now ready to rigorously state a lemma concerning coaxial deviation.

\begin{lemma}[Coaxial deviation]\label{lem:convexmeanwidth}
  Let $\Sigma$ be an axisymmetric convex surface.
  Then
  \begin{equation}\label{eq50}
    M(\Sigma)-M(S) \gtrsim \frac{b^2}{\di(\Sigma)},
  \end{equation}
  where $b$ denotes the maximal distance from an axis of symmetry $L_\mathrm{axis}$, i.e., $b:=\max_{q\in\Sigma}\mathrm{dist}(q,L_\mathrm{axis})$, and $S$ denotes a segment attaining $\di(\Sigma)$.
\end{lemma}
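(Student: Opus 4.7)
My plan is to exploit the mean-width representation $M(K)=\frac{1}{2}\int_{\mathbb{S}^2}b_K(\nu)\,dS(\nu)$ (the same representation used to derive \eqref{eq35}), which combined with $b_S(\nu)=\ell|\omega_0\cdot\nu|$ gives
\begin{equation*}
  M(\Sigma)-M(S)=\frac{1}{2}\int_{\mathbb{S}^2}\bigl(b_\Sigma(\nu)-\ell|\omega_0\cdot\nu|\bigr)\,dS(\nu),
\end{equation*}
where $\ell:=\di(\Sigma)$ and $\omega_0$ is a unit vector along $S$. The task then reduces to showing that this integral is $\gtrsim b^2/\ell$.

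I would use two geometric inputs. First, by axisymmetry, the point of $\Sigma$ realizing the maximal distance $b$ sweeps out, under rotations around $L_\mathrm{axis}$, a full circle $C\subset\Sigma$ of radius $b$ in a plane perpendicular to $L_\mathrm{axis}$; this yields $b_\Sigma(\nu)\geq b_C(\nu)=2b\sin\theta_\nu$, where $\theta_\nu:=\angle(\nu,e)$ and $e$ is a unit vector along $L_\mathrm{axis}$. Second, I claim that the angle $\psi:=\angle(\omega_0,e)$ satisfies $\sin\psi\leq 2b/\ell$. Indeed, by axisymmetry any rotation of an endpoint of $S$ around $L_\mathrm{axis}$ produces another point of $\Sigma$ whose distance from the remaining endpoint is $\leq\ell$; maximizing over this rotation forces the endpoints to lie in a common plane through $L_\mathrm{axis}$ on opposite sides, say $p_1=(d_1,0,z_1)$ and $p_2=(-d_2,0,z_2)$ with $d_i:=\mathrm{dist}(p_i,L_\mathrm{axis})\in[0,b]$; then $\ell^2=(d_1+d_2)^2+(z_1-z_2)^2$ gives $\sin\psi=(d_1+d_2)/\ell\leq 2b/\ell$.

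Using the first input, it suffices to prove
\begin{equation*}
  J:=\int_{\mathbb{S}^2}\bigl(2b\sin\theta_\nu-\ell|\omega_0\cdot\nu|\bigr)^{+}\,dS(\nu)\gtrsim b^2/\ell.
\end{equation*}
I would parametrize $\nu\in\mathbb{S}^2$ by the angle $\alpha$ from $\omega_0$ and an azimuthal angle $\varphi$ around $\omega_0$, chosen so that $\nu\cdot e=\cos\alpha\cos\psi+\sin\alpha\sin\psi\cos\varphi$, and integrate only over the subset $R$ defined by $|\alpha-\pi/2|\leq b/(4\ell)$ and $|\cos\varphi|\leq 1/4$. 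On $R$ we have $\ell|\cos\alpha|\leq b/4$ and, using $\sin\psi\leq 2b/\ell$ together with $b\leq\ell/2$, also $|\nu\cdot e|\leq b/(4\ell)+(2b/\ell)(1/4)\leq 3/8$, whence $\sin\theta_\nu\geq\sqrt{1-9/64}>\sqrt{3}/2$. Thus the integrand exceeds $\sqrt{3}b-b/4>b$ on $R$, while the two-dimensional measure of $R$ is of order $b/\ell$; combining these yields $J\gtrsim b^2/\ell$, and hence $M(\Sigma)-M(S)\gtrsim b^2/\ell=b^2/\di(\Sigma)$.

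The main obstacle is the coupling that arises when $\psi>0$: the quantitative constraint $\sin\psi\leq 2b/\ell$ is precisely what permits the choice of region $R$ to be simultaneously large enough in the $\alpha$-direction (of scale $b/\ell$) and far enough from the axis in the $\varphi$-direction (so that $\sin\theta_\nu$ stays bounded below away from zero). Without this compatibility, a large $\psi$ together with small $b$ could collapse $\sin\theta_\nu$ on the region of interest and destroy the lower bound; the axial case $\psi=0$, where the argument degenerates to integrating over the full range of $\varphi$, is easier but not representative.
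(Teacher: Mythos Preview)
Your argument is correct and takes a genuinely different route from the paper's own proof.

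The paper proceeds by \emph{geometric comparison}: it uses the inclusion monotonicity of $M$ together with the explicit computation $M(\Gamma^h_{a,A})=\pi h+\pi^2 a+2\pi\theta(A-a)$ for the singular ``cylinder/double-cone'' surfaces $\Gamma^h_{a,A}$ (Remark preceding the lemma). Writing $d=\di(\Sigma)$ and letting $r=\max_{p\in S}\mathrm{dist}(p,L_\mathrm{axis})$, it observes that $\Sigma$ encloses the cylinder $\Gamma^h_{r,r}$ with $h=\sqrt{d^2-4r^2}$, which already yields $M(\Sigma)-M(S)\geq\pi(\pi-2)r$; this handles the case $r\gtrsim b$. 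For $r\leq b/4$ it uses the larger comparison body $\Gamma^h_{r,b/2}\subset\Sigma$ and the explicit formula to extract the factor $b^2/d$.

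Your approach bypasses these comparison bodies entirely and works directly with the mean-width integral. The two key geometric inputs---the enclosed circle $C$ of radius $b$ giving $b_\Sigma(\nu)\geq 2b\sin\theta_\nu$, and the bound $\sin\psi\leq 2b/\ell$ for the tilt of the diameter direction---replace the paper's case split: they guarantee that the region $R\subset\mathbb{S}^2$ where $2b\sin\theta_\nu$ dominates $\ell|\omega_0\cdot\nu|$ has measure $\sim b/\ell$ uniformly in the tilt. (Your derivation of $\sin\psi\leq 2b/\ell$ is in fact equivalent in content to the paper's parameter $r$: one has $\sin\psi=(d_1+d_2)/\ell$ with $d_i\leq b$, and the paper's $r=\max(d_1,d_2)$.)

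What each buys: the paper's comparison method gives explicit constants with little computation once $M(\Gamma^h_{a,A})$ is known, and it isolates cleanly the two regimes ``$S$ far from the axis'' versus ``$S$ near the axis''. Your method is self-contained---it needs no auxiliary computation of $M$ on singular surfaces---and makes transparent that only the circle $C\subset\Sigma$ and the segment $S\subset\Sigma$ are really used, so it would generalize immediately to any (not necessarily axisymmetric) convex $\Sigma$ containing a circle of radius $b$ and a diameter segment at angle $\psi$ with the circle's axis satisfying $\sin\psi\lesssim b/\ell$.
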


Before entering the proof, we compute the energy $M$ for a useful example of a (singular) convex surface, which plays an important role in the proof of Lemma \ref{lem:convexmeanwidth} as a comparison surface.

\begin{remark}[A useful example $\Gamma^h_{a,A}$]\label{rem:examplecomputation}
  Given $h>0$ and $0< a\leq A$, we let $\Gamma^h_{a,A}$ denote the surface defined by the boundary of the convex hull of the two circles $C_\pm:=\{x^2+y^2=a^2,\ |z|=\pm h/2\}$ and the additional intermediate circle $C_0:=\{x^2+y^2=A^2,\ z=0\}$.
  We compute
  \begin{align}\label{eq52}
    & M(\Gamma^h_{a,A}) = \pi h + \pi^2a + 2\pi\theta(A-a),\\
    & \quad \mbox{where $\theta\in[0,\pi/2]$ such that $\tan\theta=2(A-a)/h$.} \nonumber
  \end{align}
  Indeed, an explicit calculation shows that the smooth (conical) parts $\Gamma^h_{a,A}\cap\{0<|z|<h/2\}$ has the energy $\pi h$, not depending on $a$ nor $A$.
  The energy on the singular parts $C_\pm$ and $C_0$ only depend on the angles and the lengths of the edges, in view of approximation by the $\varepsilon$-neighborhood.
  Since the above $\theta$ denotes the angle between the $z$-axis and a generating line of the conical part, the deviation angle on $C_\pm$ is $\pi/2-\theta$, and that on $C_0$ is $2\theta$.
  Then we compute the energy on $C_+$ (or $C_-$) is $\frac{1}{2}\cdot(\pi/2-\theta)\cdot2\pi a$, and that on $C_0$ is $\frac{1}{2}\cdot2\theta\cdot2\pi A$.
  Summing up all implies \eqref{eq52}.
  Note that the above computation is valid not only for a cylinder ($0<a=A$) but also for degenerate cases: e.g.\ double-cone ($0=a<A$), segment ($a=A=0$), and disk ($h=0$, and hence $\theta=\pi/2$).
\end{remark}

Now we turn to the proof of Lemma \ref{lem:convexmeanwidth}.

\begin{proof}[Proof of Lemma \ref{lem:convexmeanwidth}]
  Let $r$ be the maximal distance from $L_{\mathrm{axis}}$ concerning $S$, i.e., $r:=\max_{p\in S}\mathrm{dist}(p,L_\mathrm{axis})$.
  Since $S$ is of length $d:=\di(\Sigma)$, up to a rigid motion, the surface $\Sigma$ encloses the cylinder $\Gamma^h_{r,r}$ of radius $r$ and height $h:=\sqrt{d^2-4r^2}$.
  Recall that $M(\Gamma^h_{r,r}) = \pi (h+\pi r)$, cf.\ Remark \ref{rem:examplecomputation}.
  Using the monotonicity $M(\Sigma)\geq M(\Gamma^h_{r,r})\geq M(S)=\pi d$, and noting that $h-(d-2r)\geq0$, we have
  $$M(\Sigma)-M(S) \geq M(\Gamma^h_{r,r})-M(S) = \pi (h+\pi r) - \pi d \geq \pi(\pi-2)r.$$
  In the case of $r\not\ll b$, say $r\geq b/4$, this implies \eqref{eq50} with the help of the obvious estimate $b\leq d$.

  We now consider the case of $r\leq b/4$.
  By convexity of $\Sigma$ and definition of $b$, the surface $\Sigma$ also encloses $\Gamma^h_{r,b/2}$.
  Then from monotonicity, Remark \ref{rem:examplecomputation}, and the assumption $r\leq b/4$, we deduce that
  \begin{equation}\label{eq51}
    M(\Sigma)-M(S)\geq M(\Gamma^h_{r,b/2})-M(\Gamma^h_{r,r}) = 2\pi\theta(b/2-r) \gtrsim \theta b,
  \end{equation}
  where $\theta\in[0,\pi/2]$ satisfies that $\tan\theta=(b-2r)/h\gtrsim b/h$.
  If $\theta\geq\pi/4$, then \eqref{eq51} and $b\leq d$ again directly imply \eqref{eq50}.
  If $\theta\leq\pi/4$, then we can use the estimate $\theta\gtrsim\tan\theta$ for \eqref{eq51} to the effect that
  $$M(\Sigma)-M(S) \gtrsim b\tan\theta \gtrsim b^2/h.$$
  Then the obvious estimate $h\leq d$ implies \eqref{eq50}, completing the proof.
\end{proof}

With Lemma \ref{lem:convexmeanwidth} at hand, we are now in a position to prove Theorem \ref{thm:Topping}.

\begin{proof}[Proof of Theorem \ref{thm:Topping}]
  We may assume Hypothesis \ref{hyp:axisymmetric} on $\Sigma$.
  Throughout the proof, we let $\Sigma_\sharp$ denote the surface given by the first rearrangement of $\Sigma$, and $\Sigma_*$ by the second rearrangement of $\Sigma_\sharp$.
  For notational simplicity we let $d$ (resp.\ $d_\sharp$, $d_*$) denote $\di(\Sigma)$ (resp.\ $\di(\Sigma_\sharp)$, $\di(\Sigma_*)$).
  Notice that $d\leq d_\sharp\leq d_*\leq L$, cf.\ Lemmata \ref{lem:diameterfirst} and \ref{lem:diameter}, where $L$ denotes the (same) length of generating curves of those three surfaces.
  We divide the proof into three steps.

  {\em Step 1.}
  We first prove that
  \begin{equation}\label{eq45}
    \frac{M(\Sigma)}{d} - \pi \gtrsim \frac{b_*^2}{L^2} + \frac{d_*-d}{L}.
  \end{equation}
  Using \eqref{eq:comparison1} and $d\leq L$, we obtain
  \begin{equation}\label{eq43}
    \frac{M(\Sigma)}{d} = \frac{M(\Sigma)}{d_\sharp}\left(1+\frac{d_\sharp-d}{d}\right) \geq \frac{M(\Sigma_\sharp)}{d_\sharp}\left(1+\frac{d_\sharp-d}{L}\right).
  \end{equation}
  Similarly, using \eqref{eq:comparison2} and $d_\sharp\leq L$, we get
  \begin{equation}\label{eq57}
    \frac{M(\Sigma_\sharp)}{d_\sharp} \geq \frac{M(\Sigma_*)}{d_*}\left(1+\frac{d_*-d_\sharp}{L}\right).
  \end{equation}
  Then, using Lemma \ref{lem:convexmeanwidth} for a segment $S_*$ attaining the diameter $d_*$ of $\Sigma_*$, and also using $d_*\leq L$, we deduce that there is a universal constant $\sigma>0$ such that
  \begin{equation}\label{eq44}
    \frac{M(\Sigma_*)}{d_*} = \pi + \frac{M(\Sigma_*)-M(S_*)}{d_*} \geq \pi + \sigma\frac{b_*^2}{d_*^2} \geq \pi + \sigma\frac{b_*^2}{L^2}.
  \end{equation}
  Estimates \eqref{eq43}, \eqref{eq57}, and \eqref{eq44} imply \eqref{eq45}.

  {\em Step 2.}
  Now we verify the main geometric estimate
  \begin{equation}\label{eq46}
    \frac{b_*^2}{L^2} + \frac{d_*-d}{L} \gtrsim \frac{a_*-a}{L},
  \end{equation}
  where $a:=z(L)=\int_0^L\sin\theta(s)ds$ and $a_*:=z_*(L)=z_\sharp(L)=\int_0^L|\sin\theta(s)|ds$.
  (Below we essentially use the hypothesis $a\geq0$.)
  Throughout this step we may assume that $L=1$ up to rescaling.
  In addition, we may assume that both $b_*\ll1$ and $d_*-d\ll1$ hold since otherwise \eqref{eq46} is trivial in view of $0\leq a_*-a \leq a_*\leq L=1$.
  For later use we introduce
  $$\bar{a}:=\max_{0\leq s_1<s_2\leq 1}\left|\int_{s_1}^{s_2}\sin\theta(s)ds\right|,$$
  which is nothing but the width in the axial direction $b_\Sigma(\omega_\mathrm{axis})$ of the original $\Sigma$.

  We first check the (optimal) estimate
  \begin{equation}\label{eq47}
    a_*-a \leq 2 (a_*-\bar{a}),
  \end{equation}
  by showing the equivalent one $2\bar{a}\leq a_*+a$.
  Notice the representation
  $$a_*+a = \int_0^1|\sin\theta(s)|ds + \int_0^1 \sin\theta(s)ds = 2 \int_0^1(\sin\theta(s))_+ds,$$
  where here and in the sequel we let $f_\pm\geq0$ denote the sign-decomposition $f=f_+-f_-$.
  Choose $s_1<s_2$ attaining the maximum in definition of $\bar{a}$, and let $J:=[s_1,s_2]$.
  Then we see that if $\alpha:=\int_J\sin\theta(s)ds\geq0$, then $\bar{a}=\int_J\sin\theta(s)ds$ and hence
  $$2\bar{a}=2\int_J\sin\theta(s)ds \leq 2\int_0^1(\sin\theta(s))_+ds = a_* + a,$$
  while if $\alpha \leq 0$, then $\bar{a}=-\int_J \sin\theta(s)ds$ and hence, noting that $a=\int_0^1\sin\theta(s)ds\geq0$, we also have
  $$2\bar{a}=-2\int_J \sin\theta(s)ds \leq 2\int_{[0,1]\setminus J}\sin\theta(s)ds \leq 2\int_0^1(\sin\theta(s))_+ds = a_*+a,$$
  completing the proof of \eqref{eq47}.

  We now prove that if $b_*\ll1$ and $d_*-d\ll1$, then
  \begin{equation}\label{eq48}
    a_*-\bar{a}\lesssim (d_*-d) + b_*^2.
  \end{equation}
  Since $\Sigma$ is contained in a cylinder of radius $b_*$ and height $\bar{a}$, we have
  \begin{equation}\label{eq54}
    d \leq \sqrt{\bar{a}^2+4b_*^2} \leq \bar{a} + 4\bar{a}^{-1}b_*^2.
  \end{equation}
  We now observe that the smallness assumptions imply
  \begin{equation}\label{eq56}
    \bar{a}\gtrsim1.
  \end{equation}
  Since $\gamma_*$ (generating $\Sigma_*$) is convex and of length $1$, we have $d_*\gtrsim1$.
  This and the assumption $d_*-d\ll1$ imply that $d\gtrsim1$.
  Using this and the assumption $b_*\ll1$ for \eqref{eq54}, we obtain $\bar{a}^2 \geq d^2-4b_*^2 \gtrsim1$ and hence \eqref{eq56} as desired.
  Inserting \eqref{eq56} into \eqref{eq54}, we get
  $$d -\bar{a} \lesssim b_*^2.$$
  Combining this with the obvious estimate $a_*\leq d_*$, we obtain \eqref{eq48}.
  Estimate \eqref{eq46} now follows from \eqref{eq47} and \eqref{eq48}.

  {\em Step 3.}
  We finally complete the proof.
  Estimates \eqref{eq45} and \eqref{eq46} imply that
  \begin{equation*}
    \frac{M(\Sigma)}{d} - \pi \gtrsim \frac{b_*^2}{L^2} + \frac{a_* - a}{L}.
  \end{equation*}
  Using the representations $2b_* = \int_0^L |\cos\theta(s)| ds$ and $a_*-a = 2\int_0^L (\sin\theta(s))_- ds$, and the change of variables $t=s/L$, we find that
  \begin{equation}\label{eq53}
    \frac{M(\Sigma)}{d} -\pi \gtrsim \Big(\int_0^1 |\cos\theta(Lt)| dt\Big)^2 + \int_0^1 (\sin\theta(Lt))_- dt.
  \end{equation}
  Simply letting $X^2+Y$ denote the right-hand side of \eqref{eq53}, we have $X^2+Y\gtrsim (X+Y)^2$ since $X,Y\in[0,1]$.
  In addition, the integrand of $X+Y$ has the lower bound of the form $|\cos\theta|+(\sin\theta)_-\gtrsim\sqrt{1-\sin\theta}$, where both sides linearly vanish if and only if $\theta\in\pi/2+2\pi\mathbb{Z}$.
  Therefore,
  \begin{equation*}
    \frac{M(\Sigma)}{d} -\pi \gtrsim \Big(\int_0^1 \sqrt{1-\sin\theta(Lt)} dt\Big)^2.
  \end{equation*}
  The relation $\sin\theta(Lt)=T(t)\cdot\omega_\mathrm{axis}$ for $\omega_\mathrm{axis}=(0,0,1)$ completes the proof.
\end{proof}

Here we discuss the optimality of Theorem \ref{thm:main1}.
As is mentioned in the introduction, the optimality can be observed by using the double-cone, which is nothing but the degenerate case $\Gamma^1_{0,\varepsilon}$ in Remark \ref{rem:examplecomputation}.

\begin{remark}[Optimality of Theorem \ref{thm:Topping} via the double-cone $\Gamma^1_{0,\varepsilon}$]\label{rem:doublecone}
  Let $\Sigma_\varepsilon$ be the thin double-cone $\Gamma^1_{0,\varepsilon}$ with $\varepsilon\ll1$.
  Obviously, $\di(\Sigma_\varepsilon)=1$.
  In addition, using the computation of $M$ in Remark \ref{rem:examplecomputation}, and in particular noting that the corresponding angle $\theta_\varepsilon$ in \eqref{eq52} is given by $\tan\theta_\varepsilon=2\varepsilon$ so that $\theta_\varepsilon/\varepsilon\to2$ as $\varepsilon\to0$, we have
  \begin{equation*}
    \frac{M(\Sigma_\varepsilon)}{\di(\Sigma_\varepsilon)} = \pi + 2\pi\theta_\varepsilon\varepsilon = \pi+4\pi\varepsilon^2+o(\varepsilon^2).
  \end{equation*}
  On the other hand, a simple computation yields
  $$U(\Sigma_\varepsilon) = \Big(\int_0^1\sqrt{1-\cos\theta_\varepsilon}\Big)^2 = 1-\cos\theta_\varepsilon =  2\varepsilon^2+o(\varepsilon^2),$$
  and hence the ratio $\frac{1}{U(\Sigma_\varepsilon)}(\frac{M(\Sigma_\varepsilon)}{\di(\Sigma_\varepsilon)}-\pi)$ converges as desired.
  Note that by a suitable approximation of higher order than $\varepsilon$, we can even construct a sequence of smooth surfaces (nearly double-cone) for which the ratio also converges.
\end{remark}

We additionally remark a difference between the axial and coaxial directions.

\begin{remark}[Optimal remainder in the axial direction]\label{rem:optimalityaxial}
  In the third step of the proof of Theorem \ref{thm:Topping}, the remainder is eventually simplified into $U$, but this is just for notational convenience to state Theorem \ref{thm:Topping}.
  The simplified remainder $U$ is still sharp with respect to coaxial deviation as in Remark \ref{rem:doublecone}, but in fact not sharp axially.
  To see this fact, we let $\Sigma_{\varepsilon}$ be the surface generated by revolving the broken line connecting $(x,z)=(0,0),(\varepsilon^2,-\varepsilon),(\varepsilon^2,1+\varepsilon),(0,1)$ around the $z$-axis.
  Then we have
  $$\frac{M(\Sigma_\varepsilon)}{\di(\Sigma_\varepsilon)}=(1+4\varepsilon)\pi + o(\varepsilon), \quad U(\Sigma_\varepsilon) = 4\varepsilon^2+o(\varepsilon^2),$$
  so that the ratio $\frac{1}{U}(\frac{M}{\di}-\pi)$ diverges.
  However, if we replace $U$ by a more sharp remainder $V$ such as the right-hand side in \eqref{eq53}, i.e.,
  $$V(\Sigma) := \Big(\int_0^1 |N(t)\cdot\omega_\mathrm{axis}| dt\Big)^2 + \int_0^1 (T(t)\cdot\omega_\mathrm{axis})_- dt,$$
  where $N(t)$ denotes the unit normal of $\Sigma$ at $\gamma_\Sigma(t)$, then the ratio $\frac{1}{V}(\frac{M}{\di}-\pi)$ converges even for the above $\Sigma_\varepsilon$.
\end{remark}

We now complete the proof of Corollary \ref{cor:Topping}.

\begin{proof}[Proof of Corollary \ref{cor:Topping}]
  We only need to argue for a minimizing sequence $\{\Sigma_n\}$.
  We may assume up to rescaling that $\di(\Sigma_n)=1$, and up to a rigid motion that $\Sigma_n$ satisfies Hypothesis \ref{hyp:axisymmetric}.
  Let $\theta_n:[0,1]\to\mathbb{R}$ denote the angle function in Hypothesis \ref{hyp:axisymmetric} corresponding to $\Sigma_n$ after the change of variables $t=s/L_n$, where $L_n$ is the length of a generating curve of $\Sigma_n$.
  Since the distance of the two points in the $z$-axis (i.e., the endpoints of a generating curve) is bounded above by the diameter, we have
  \begin{equation}\label{eq49}
    L_n\int_0^1\sin\theta_n(t)dt \leq 1.
  \end{equation}
  The assumption of convergence $M(\Sigma_n)/\di(\Sigma_n)\to\pi$ and Theorem \ref{thm:Topping} imply that $\int_0^1\sqrt{1-\sin\theta_n}\to0$; hence, $\sin\theta_n(t)\to1$ and also $\cos\theta_n(t)\to0$ for a.e.\ $t\in[0,1]$.
  Using the bounded convergence theorem, we deduce that $\sin\theta_n\to1$ and $\cos\theta_n\to0$ in $L^p((0,1))$ for any $1\leq p < \infty$.
  In addition, also noting that $L_n\leq1$ ($=\di(\Sigma_n)$), we deduce from \eqref{eq49} that $L_n\to1$.
  Hence, for the derivative $\dot{\gamma}_{\Sigma_n}=L_n(\cos\theta_n,0,\sin\theta_n)$ of the generating curve $\gamma_{\Sigma_n}$ chosen to lie in the $xz$-plane, we see that $\dot{\gamma}_{\Sigma_n}$ converges in $L^p((0,1);\mathbb{R}^3)$ to the derivative $\dot{\bar{\gamma}}=(0,0,1)$ of the segment $\bar{\gamma}(t)=(0,0,t)$.
  The lower order convergence of $\gamma_{\Sigma_n}$ to $\bar{\gamma}$ easily follows from this first order one.
\end{proof}

We finally recall that Topping's conjecture is also related to finding the optimal constant in Simon's inequality of the form
$$A^\frac{1}{2}\Big(\int_\Sigma H^2\Big)^\frac{1}{2}\geq\frac{\pi}{2}\di(\Sigma).$$
The constant $\pi/2$ is explicitly obtained by Topping \cite{Topping1998} following Simon's original strategy \cite{Simon1993}.
It is also conjectured that $\pi/2$ can be replaced with $\pi$ by the same reason as Topping's conjecture.
Since Simon's inequality is implied by Topping's inequality via the Cauchy-Schwarz inequality, our result also gives the optimal constant for Simon's one under the same assumption as in Theorem \ref{thm:Topping}.

\appendix

\section{Mean curvature estimates in rearrangements}\label{app:meancurvature}

We briefly recall the arguments in \cite{Dalphin2016} about how the rearrangements defined in Section \ref{sec:Topping} control mean curvature.

We first address the first rearrangement, $\Sigma\to\Sigma_\sharp$, and prove that $\int_{\Sigma_\sharp}|H|=\int_\Sigma|H|$.
A direct computation yields the representation
\begin{equation}\label{eq28}
  \int_\Sigma|H|=\pi\int_0^L|\sin\theta(s)+\theta_s(s)x(s)|ds.
\end{equation}
As is already observed in the above proof, we have
\begin{equation}\label{eq27}
  x_\sharp(s)=
  x(s) \quad \mbox{for every} \ s\in[0,L],
\end{equation}
and in addition that $|\sin\theta|=\sin\theta^\sharp$; more precisely,
\begin{align}\label{eq25}
  \sin\theta^\sharp(s) = \pm\sin\theta(s) \quad \mbox{if}\ \theta(s)\in S_\pm:= 2\pi\mathbb{Z}\pm[0,\pi].
\end{align}
Furthermore, since $\theta^\sharp=f\circ\theta$, where both $f:=\mathrm{dist}(\cdot,2\pi\mathbb{Z})$ and $\theta$ are Lipschitz, we have the chain rule $\theta^\sharp_s=(f'\circ\theta)\theta_s$ with the understanding that $[(f'\circ\theta)\theta_s](s)=0$ whenever $\theta_s(s)=0$ (cf.\ \cite[Corollary 3.66]{Leoni2017}).
Since $f'\equiv\pm1$ on $S_\pm$, we get
\begin{equation}\label{eq26}
  \theta^\sharp_s(s) = \pm\theta_s(s) \quad \mbox{if}\ \theta(s)\in S_\pm.
\end{equation}
Inserting \eqref{eq27}, \eqref{eq25}, and \eqref{eq26} into \eqref{eq28}, we deduce that
$\int_{\Sigma_\sharp}|H|=\int_{\Sigma}|H|$.

We now turn to the second rearrangement, $\Sigma\to\Sigma_*$, and prove that
$\int_{\Sigma_*}|H|\leq\int_\Sigma|H|$.
Since $\int_{\Sigma_*}|H|=\int_{\Sigma_*}H$ by convexity of $\Sigma_*$, in view of the triangle inequality it suffices show that $\int_{\Sigma_*}H=\int_\Sigma H$.
By a direction computation and an integration by parts, using that $x(0)=x(L)=0$, we have the representation
\begin{equation*}
  \int_\Sigma H = \pi\int_0^L \big( \sin\theta(s)+\theta_s(s)x(s) \big)ds = \pi\int_0^L g(\theta(s))ds,
\end{equation*}
where $g(\theta):= \sin\theta -\theta\cos\theta$.
The same representation $\int_{\Sigma_*}H=\pi\int_0^Lg(\theta^*(s))ds$ also holds for the rearranged surface $\Sigma_*$ since $x_*(0)=x_*(L)=0$.
We then deduce the desired identity $\int_{\Sigma_*}H=\int_\Sigma H$
from these representations and the integration-preserving property of rearrangement.

\bibliography{bibliography}
\bibliographystyle{amsplain}

\end{document}